\renewcommand{\thesubfigure}{Fig. \arabic{subfigure}}
\makeatletter \renewcommand{\@thesubfigure}{\thesubfigure \space}
\renewcommand{\p@subfigure}{} \makeatother
\def\titlerunning#1{\gdef\titrun{#1}}
\def\author#1{\gdef\autrun{\def\and{\unskip, }#1}\gdef\@author{#1}}
\def\address#1{{\def\and{\\\hspace*{18pt}}\renewcommand{\thefootnote}{}%
\footnote {#1}}%
\markboth{\autrun}{\titrun}}
\def\email#1{e-mail: #1}
\def\subjclass#1{{\renewcommand{\thefootnote}{}%
\footnote{\emph{Mathematics Subject Classification (2020):} #1}}}
\theoremstyle{plain}
\newtheorem{thm}{Theorem}[section]
\newtheorem{prop}[thm]{Proposition}
\newtheorem{lem}[thm]{Lemma}
\newtheorem{dfn}[thm]{Definition}
\newtheorem{cor}[thm]{Corollary}
\newtheorem{re}{Remark}
\newtheorem{ex}{Example}[section]
\numberwithin{equation}{section}
\newcommand{\eps}{\varepsilon}
\begin{document}
\baselineskip=16pt

\titlerunning{Quasi-stationary MFG}
\title{Existence of Solutions and Selection Problem for Quasi-stationary Contact Mean Field Games}
\author{Xiaotian Hu}
\date{}

\maketitle
\address{Xiaotian Hu : School of Mathematical Sciences, Shanghai Jiao Tong University, Shanghai 200240, China; \email{sjtumathhxt@sjtu.edu.cn}}
\subjclass{35Q89, 37J51, 49N80}

\begin{abstract}
 First, we study the existence of solutions for a class of first order mean field games systems
 \begin{equation*}
 	\left\{\begin{aligned}
 	&H(x,u,Du)=F(x,m(t)),\quad &&x\in M,\ \forall\ t\in[0,T],\\
 	&\partial_t m-\text{div}\left(m\dfrac{\partial H}{\partial p}(x,u,Du)\right)=0,\quad &&(x,t)\in M\times(0,T],\\
 	&m(0)=m_0,
\end{aligned}\right.
 \end{equation*}
 where the system comprises a stationary Hamilton-Jacobi equation in the contact case and an evolutionary continuity equation.
 
 Then, for any fixed $\lambda>0$, let $(u^\lambda,m^\lambda)$ be a solution of the system
 \begin{equation*}
 	\left\{
 	\begin{aligned}
 		&H(x,\lambda u^\lambda,Du^\lambda)=F(x,m^\lambda(t))+c(m^\lambda(t)),\quad &&x\in M,\ \forall t\in[0,T],\\
 		&\partial_t m^\lambda-\text{div}\left(m^\lambda\dfrac{\partial H}{\partial p}(x,\lambda u^\lambda,Du^\lambda)\right)=0,\quad &&(x,t)\in M\times(0,T],\\
 		&m(0)=m_0,
 	\end{aligned}\right.
 \end{equation*}
	 where $c(m^\lambda(t))$ is the Ma\~n\'e critical value of the Hamiltonian $H(x,0,p)-F(x,m^\lambda(t))$. We investigate the selection problem for the limit of $(u^\lambda,m^\lambda)$ as $\lambda$ tends to 0.

 \bigskip
 
 \textbf{Keywords.} Contact mean field games, quasi-stationary, selection problem, viscosity solution, weak KAM theory
\end{abstract}

\section{Introduction}
In 2007, Lasry, Lions \cite{LL07} introduced the mean field games (MFG) theory, which provides a mathematical framework to analyze decision-making in large population of small interacting agents. A standard first order system takes the following form
\begin{equation}\label{MFG-t}
	\left\{\begin{aligned}
		&-\partial_tu^T+H_0(x,Du^T)=f(x,m^T(t)),\quad &&(x,t)\in \mathbb{R}^d\times(0,T),\\
		&\partial_tm^T-\text{div}\left(m^T\dfrac{\partial H_0}{\partial p}(x,Du^T)\right)=0,\quad &&(x,t)\in\mathbb{R}^d\times(0,T),\\
		&u(x,T)=g(x,m(T)),\ m(0)=m_0,\quad &&x\in\mathbb{R}^d.
	\end{aligned}\right.
\end{equation}
This system consists of a Hamilton-Jacobi equation coupled with a continuity equation, where  $f$  represents a nonlocal coupling term. For any fixed $T>0$, $u^T$ and $m^T$ are unknowns, and the pair $(u^T,m^T)$ is a weak solution of the system \eqref{MFG-t}. Here, $u^T$ interprets as the value function of an agent, while $m^T$ describes the density of the agent population. The optimal strategy follows a feedback by $-D_p H_0(x,Du^T)$. When all agents adopt the same strategy, the system reaches an equilibrium.

However, in real-world scenarios, accurately predicting future states is often challenging for agents due to inherent uncertainties. Rather than solving a fully forward-looking optimization problem, an agent typically makes decisions based only on the information available at the current time  $t$. In particular, the agent observes the mean-field distribution $m(t)$ at that moment and determines an optimal strategy based on this present state, without explicitly accounting for its future evolution. To address this setting, Mouzouni \cite{M20} introduced the concept of quasi-stationary MFG systems, which captures decision-making under such conditions. Further developments on the quasi-stationary system can be found in \cite{CM23, CMM24arxiv}.

In this paper, we aim to investigate the quasi-stationary contact MFG of first order

	\begin{empheq}[left={(qMFG)}\qquad\empheqlbrace]{alignat=2}
		&H(x,u^T,Du^T)=F(x,m^T(t)),\quad &&x\in M,\ \forall\ t\in[0,T], \label{cH-J with m}\\
		&\partial_t m^T-\text{div}\left(m^T\dfrac{\partial H}{\partial p}(x,u^T,Du^T)\right)=0,\quad &&(x,t)\in M\times(0,T],\label{continuity eq}\\
		&m^T(0)=m_0, \label{initial value}
	\end{empheq}
where $M$ is a compact $d$-dimensional manifold without boundary  (e.g. $\mathbb{T}^d$), and $H:T^*M\times\mathbb{R}\to\mathbb{R}$ is a contact Hamiltonian. For any fixed $t\in[0,T]$, the equation \eqref{cH-J with m} is a stationary contact Hamilton-Jacobi equation with the nonlocal coupling term $F$, and $m(t)$ is a Borel probability measure on $M$ with its density, still denoted by $m(t)$. The equation \eqref{continuity eq} is an evolutionary continuity equation with the initial value \eqref{initial value}.  For simplicity, we denote $u^T$ and $m^T$ as $u$ and $m$, respectively.

Then, for any fixed $\lambda>0$, we consider the system
\begin{equation*}
	(qMFG_\lambda)\qquad
	\left\{
	\begin{aligned}
		&H(x,\lambda u^\lambda,Du^\lambda)=F(x,m^\lambda(t))+c(m^\lambda(t)),\quad &&x\in M,\ \forall t\in[0,T],\\
		&\partial_t m^\lambda-\text{div}\left(m^\lambda\dfrac{\partial H}{\partial p}(x,\lambda u^\lambda,Du^\lambda)\right)=0,\quad &&(x,t)\in M\times(0,T],\\
		&m(0)=m_0,
	\end{aligned}\right.
\end{equation*}
where $c(m^\lambda(t))$ is the Ma\~n\'e critical value of $H(x,0,p)-F(x,m^\lambda(t))$. For any fixed $\lambda>0$, there exists a weak solution $(u^\lambda,m^\lambda)$ of $(qMFG_\lambda)$. The selection problem aims to determine whether the limit of the family $(u^\lambda,m^\lambda$) as $\lambda \to 0$ exists and, if it does, to understand its characteristics and properties.

The selection problem was first explored in \cite{G08} using generalized Mather measures. In 2016, Davini, Fathi, Iturriaga and Zavidovique \cite{DFIZ16} studied the problem for the unique viscosity solution $w^\lambda$ of the discounted Hamilton-Jacobi equation
\begin{equation*}
	\lambda w^\lambda+h(x,Dw^\lambda)=c(h),\quad x\in M,
\end{equation*}
when $\lambda>0$ and $c(h)$ is the Ma\~n\'e critical value. They proved that as $\lambda\to 0$, the family of $w^\lambda$ uniformly converges to $w_0$, where $w_0$ is a viscosity solution of $h(x,Dw)=c(h)$ and $\lambda w^\lambda\to 0$. Since the publication of this result, a number of related works have appeared, advancing the study of the selection problem for the contact Hamiltonian $H(x,u,p)$, which is either increasing or decreasing in $u$ (see, for example, \cite{C23, CCIZ19, CFZZ24, DNYZ24arxiv, DW21, GMT18, WYZ21, Z22} and the references therein). The selection problem for second order mean field games can be found in \cite{CP19,CP21}. For the first order case, it remains an open problem, although similar results have been obtained for a special class of MFG systems \cite{GMT20,MT23}.

Our approach is based on weak KAM theory and some PDE techniques. Weak KAM theory, originally introduced by Fathi \cite{F08}, established a fundamental connection between viscosity solutions and the dynamics of positive-definite Hamiltonian systems. Cardaliaguet \cite{C13a} was among the first to incorporate weak KAM theory into the study of first order mean field games. For further applications of weak KAM methods in first order MFG, we refer to \cite{CCMW20, CCMW21, CMM24arxiv, IW23}. Hu and Wang \cite{HW22} investigated the existence of weak solutions to the stationary first order contact MFG by weak KAM theory for contact Hamiltonians \cite{IWWY21, WWY19a, WWY19b}.

The rest of the paper is organized as follows. In Section 2, we introduce the notations, and state our main results, as well as the assumptions on the contact Hamiltonian $H$, the nonlocal coupling term $F$ and the initial value $m_0$. In Section 3, we investigate the contact Hamilton-Jacobi equation, and review some weak KAM results, which are useful in the paper. In Section 4, we investigate the pushforward of the measure $m_0$, and prove the existence of weak solutions for $(qMFG)$ by fixed point theorem. In Section 5, we investigate the selection problem and prove the convergence result of the family of weak solutions to $(qMFG_\lambda)$.

\section{Preliminaries and main results}
Let $\mathcal{P}(M)$ denote the set of Borel probability measures on the state space $M$ with weak* topology. A sequence $\{m_n\}_{n\in\mathbb{N}}\subset \mathcal{P}(M)$ weakly* converges to $m\in\mathcal{P}(M)$, denoted by $m_n \xlongrightarrow{w^*}m$, if for any $\varphi\in C(M)$, we have
$$\lim_{n\to\infty}\int_M\varphi dm_n=\int_M \varphi dm.$$
 We recall the definition of Kantorovich-Rubinstein distance $d_1$ on probability measure spaces $\mathcal{P}_1(M)$, where $\mathcal{P}_1(M)$ is the set of Borel probability measures with the finite moment of order $1$ on $M$. More precisely, for any $m_1,m_2\in \mathcal{P}_1(M)$,
$$d_1(m_1,m_2)=\sup_{\varphi}\int_M \varphi d(m_1-m_2)=\inf_{\pi\in\Pi(\mu,\nu)}\int_{M\times M}\vert x-y\vert d\pi(x,y),$$
where the supremum is taken among all $1$-Lipschitz continuous functions $\varphi: M\to\mathbb{R}$, and $\Pi(\mu,\nu)$ is the set of Borel probability measures on $M\times M$ such that $\pi(A\times M)=\mu(A)$ and $\pi(M\times A)=\nu(A)$ for any Borel sets $A\subset M$. Note that 
$$d_1(m_n,m)\to 0\Longleftrightarrow m_n\xlongrightarrow{w^*}m.$$

Since $M$ is compact, we have that $\mathcal{P}_1(M)=\mathcal{P}(M)$. We denote by $C([0,T];\mathcal{P}(M))$ the set such that for any $m\in C([0,T];\mathcal{P}(M))$, $m(t)\in\mathcal{P}(M)$ and $m(\cdot)$ is continuous.

\begin{dfn}
	We call a pair $(u,m)\in C(M\times[0,T])\times C([0,T];\mathcal{P}(M))$ a weak solution of $(qMFG)$, if for any fixed $t\in[0,T]$, $u(\cdot,t)$ is a viscosity solution of \eqref{cH-J with m}, and $m$ is a distributional solution of \eqref{continuity eq} with the initial value \eqref{initial value}.
\end{dfn}

We assume that the Hamiltonian $H$ is of at least $C^3$ and satisfies the following assumptions:
\begin{enumerate}[(H1)]
	\item \textbf{Uniform convexity:} There is a constant $C_H>0$ such that for any $(x,u)\in M\times\mathbb{R}$, $$\dfrac{I}{C_H}\le\dfrac{\partial^2 H}{\partial p^2}(x,u,p)\le C_H I.$$
	\item \textbf{Superlinearity:} For any $(x,u)\in M\times\mathbb{R}$, $H(x,u,p)$ is superlinear in $p$, i.e.,
	$$\lim_{\vert p\vert\to\infty}\dfrac{H(x,u,p)}{\vert p\vert}=\infty,\quad \text{for any }(x,u)\in M\times\mathbb{R}.$$
	\item \textbf{Monotonicity:} There exist constants $\tau>0$, $\Lambda>0$ such that for all $(x,u,p)\in T^*M\times\mathbb{R}$,$$0<\tau<\dfrac{\partial H}{\partial u}(x,u,p)\le \Lambda.$$
\end{enumerate}
\begin{re}
	These assumptions on $H$ are stronger than classical Tonelli assumptions \cite{F08}. The $C^3$ regularity of $H$ is a technical assumption, as we need to use some weak KAM theory results, which are obtained in \cite{WWY19a}, on contact Hamiltonians.
\end{re}

	The contact Lagrangian $L:TM\times\mathbb{R}\to\mathbb{R}$ is defined as
	$$L(x,u,q):=\sup_{p\in T_x^*M}\{p\cdot q-H(x,u,p)\}.$$
	It is straightforward to verify that $L$ satisfies the following properties:
	\begin{enumerate}[(L1)]
		\item There is a constant $C_H>0$ such that $$\dfrac{I}{C_H}\le\dfrac{\partial^2 L}{\partial q^2}(x,u,q)\le C_H I.$$
		\item For any $(x,u)\in M\times\mathbb{R}$, $L(x,u,q)$ is superlinear in $q$, i.e.
		$$\lim_{\vert q\vert\to\infty}\dfrac{L(x,u,q)}{\vert q\vert}=\infty,\quad \text{for any}\ (x,u)\in M\times\mathbb{R}.$$
		\item There exist constants $\tau>0$, $\Lambda>0$ such that for all $(x,u,q)\in TM\times\mathbb{R}$,$$-\Lambda\le \dfrac{\partial L}{\partial u}(x,u,q)<-\tau<0.$$
	\end{enumerate}

\begin{ex}
	For any fixed $\beta>0$, the discounted Hamiltonian
	$$H(x,u,p)=\beta u+h(x,p)$$
	satisfies (H1)-(H3), if $h$ is at least of $C^3$ and satisfies (H1)-(H2). The discounted Lagrangian is denoted by $L(x,u,q)=-\beta u+l(x,q)$, where $l(x,q)$ is the Lagrangian associated to the Hamiltonian $h$.
\end{ex}

The nonlocal coupling term $F:M\times\mathcal{P}(M)\to\mathbb{R}$ satisfies
\begin{enumerate}[(F1)]
	\item The map $x\mapsto F(x,m)$ is of class $C^2$ with $$\sup_{m\in\mathcal{P}(M)}\sum_{\vert \alpha\vert \le 2}\Vert D^\alpha F(\cdot,m)\Vert_\infty<\infty.$$
	\item The function $F$ and its derivative $D_xF$ are both continuous on $M\times\mathcal{P}(M)$.
	\item The map $m\mapsto F(x,m)$ is Lipschitz continuous with Lipschitz constant $C_F$, i.e.,
	$$\vert F(x,m_1)-F(x,m_2)\vert\le C_F d_1(m_1,m_2),\quad \text{for any }m_1,m_2\in\mathcal{P}(M),x\in M.$$
\end{enumerate}

Then we impose an assumption on the initial value $m_0$ in \eqref{initial value}.
\begin{enumerate}[(P)]
	\item The Borel probability measure $m_0$ is absolutely continuous with respect to Lebesgue measure $\mathcal{L}$ with the density, still denoted by $m_0$, which is bounded.
\end{enumerate}
The assumption (P) guarantees that the pushforward of the measure $m_0$ is also absolutely continuous and bounded (see Lemma \ref{4 push-for ac bound}). In this paper, we always assume (H1)-(H3), (F1)-(F3) and (P).

We present the first main result.
\begin{thm}\label{main result 1}
	Assume (H1)-(H3), (F1)-(F3) and (P). There exists a weak solution $(\bar{u},\bar{m})\in C(M\times[0,T])\times C([0,T];\mathcal{P}(M))$ of $(qMFG)$ such that
	\begin{enumerate}[(I)]
		\item  $\bar{u}(\cdot,t)$ is a viscosity solution of \eqref{cH-J with m} for any fixed $t\in[0,T]$, and $\bar{m}$ is a distributional solution of \eqref{continuity eq}.
		\item  $\bar{m}$ is Lipschitz continuous on $[0,T]$, i.e., there exists some constant $C\ge0$, such that for any $t,s,\in[0,T]$, we have
		$$d_1(\bar{m}(t),\bar{m}(s))\le C\vert t-s\vert.$$ 
		
		For any $t\in[0,T]$, $\bar{m}(t)$ is absolutely continuous with respect to the Lebesgue measure $\mathcal{L}$, and that we denote by $\bar{m}(t)$ also the density, which satisfies $\Vert\bar{m}(t)\Vert_\infty\le C$ for some constant $C=C(T,\Vert m_0\Vert_\infty,H)$.
		\item $\bar{u}(\cdot,t)$ is Lipschitz continuous and semi-concave uniformly on $[0,T]$ and $\bar{u}(x,\cdot)$ is Lipschitz continuous uniformly on $M$.
	\end{enumerate}
\end{thm}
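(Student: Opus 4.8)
The plan is to set up a Schauder fixed point argument for the distribution component $m$, using the contact structure (H3) to make the Hamilton--Jacobi step single-valued. \emph{Step 1 (the stationary HJ map).} Fix $m\in C([0,T];\mathcal P(M))$. For each $t$, by the contact weak KAM results recalled in Section~3, the strict monotonicity $\partial_uH\ge\tau>0$ makes \eqref{cH-J with m} with datum $F(\cdot,m(t))$ \emph{proper}, so it admits a unique viscosity solution $u_m(\cdot,t)$. I would then record a priori estimates, all uniform in $m$ and $t$: an $L^\infty$ bound on $u_m$ obtained by testing the sub/supersolution inequalities at interior extrema and integrating $\partial_uH\ge\tau$; a Lipschitz bound in $x$ from coercivity (H2) once the $L^\infty$ bound is known; and a semiconcavity bound from uniform convexity (H1) together with the $C^2$ bound (F1) on $F(\cdot,m)$ (e.g.\ via the implicit variational representation of $u_m$). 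The comparison principle, again using $\partial_uH\ge\tau$, gives the quantitative stability $\|u_{m_1}(\cdot,t)-u_{m_2}(\cdot,t)\|_\infty\le\tau^{-1}\|F(\cdot,m_1(t))-F(\cdot,m_2(t))\|_\infty$, so by (F2)--(F3) the assignment $m\mapsto u_m$ is continuous from $(C([0,T];\mathcal P(M)),d_1)$ into $C(M\times[0,T])$ and maps Lipschitz-in-$t$ curves $m$ to Lipschitz-in-$t$ value functions $u_m$.

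\emph{Step 2 (the continuity equation map).} Given $u=u_m$, set $b_m(x,t):=-\partial_pH(x,u(x,t),Du(x,t))$, which is bounded by the Lipschitz estimate of Step~1. The key computation is the one-sided spatial divergence bound $\mathrm{div}_x b_m(\cdot,t)\ge -C_3$: expanding the divergence, the only dangerous term is $-\mathrm{tr}\!\big(\partial^2_{pp}H\cdot D^2u_m(\cdot,t)\big)$, which is bounded above because $\partial^2_{pp}H\ge C_H^{-1}I>0$ while $D^2u_m(\cdot,t)\le C_2I$ in the sense of distributions by semiconcavity. I would then solve \eqref{continuity eq}--\eqref{initial value} by pushing $m_0$ forward along the flow generated by $b_m$ (the object studied in Section~4), obtaining the unique distributional solution $\mathcal T(m)\in C([0,T];\mathcal P(M))$, and show that it satisfies: $\mathcal T(m)(t)\ll\mathcal L$ with density $\|\mathcal T(m)(t)\|_\infty\le e^{C_3T}\|m_0\|_\infty$, because the Jacobian of the flow stays bounded below by $e^{-C_3T}$; and $d_1(\mathcal T(m)(t),\mathcal T(m)(s))\le\|b_m\|_\infty\,|t-s|$, since the transport velocity is bounded.

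\emph{Step 3 (fixed point).} I would apply Schauder's theorem to $\mathcal C:=\{m\in C([0,T];\mathcal P(M)):m(0)=m_0,\ d_1(m(t),m(s))\le C_1|t-s|,\ m(t)\ll\mathcal L,\ \|m(t)\|_\infty\le e^{C_3T}\|m_0\|_\infty\}$, viewed inside $C([0,T];\mathcal M(M))$. The set $\mathcal C$ is convex, and it is compact by Arzel\`a--Ascoli (equi-Lipschitz in $t$, values in the weak*-compact set $\mathcal P(M)$) together with closedness of the $L^\infty$-density and Lipschitz constraints under weak* limits; Step~2 gives $\mathcal T(\mathcal C)\subset\mathcal C$. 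For continuity of $\mathcal T$ on $\mathcal C$: if $m_n\to m$ then $u_{m_n}\to u_m$ uniformly with uniform semiconcavity, hence $Du_{m_n}\to Du_m$ a.e., so $b_{m_n}\to b_m$ a.e.\ and boundedly; since the densities of $\mathcal T(m_n)$ are uniformly bounded in $L^\infty$, one passes to the limit in the weak formulation of \eqref{continuity eq} and identifies the limit with $\mathcal T(m)$ by the uniqueness in Step~2. A fixed point $\bar m=\mathcal T(\bar m)$, together with $\bar u:=u_{\bar m}$, is then a weak solution of $(qMFG)$: (I) is the definition, (II) is built into $\mathcal C$ and the density bound of Step~2, and (III) follows from the uniform Lipschitz and semiconcavity estimates of Step~1 plus $\|\bar u(\cdot,t)-\bar u(\cdot,s)\|_\infty\le\tau^{-1}C_F\,d_1(\bar m(t),\bar m(s))\le\tau^{-1}C_FC_1|t-s|$.

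I expect the genuine difficulty to be Step~2: the drift $b_m$ is only $BV$ in $x$ (and merely measurable in $t$), so neither its flow nor the pushforward of $m_0$ is classical. Making this rigorous requires either the well-posedness theory for continuity equations with $BV$ drift of one-sided bounded divergence, or a direct construction of the flow from the calibrated curves of the contact variational principle followed by an approximation argument showing the pushforward stays absolutely continuous with a controlled $L^\infty$ density; establishing stability of this construction under perturbation of $m$ — needed for the continuity of $\mathcal T$ in Step~3 — is the other delicate point. The remaining ingredients, namely the a priori estimates on $u_m$ and the topology of the fixed point set, are routine once the contact weak KAM theory of Section~3 is in hand.
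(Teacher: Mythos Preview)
Your proposal is correct and follows essentially the same strategy as the paper: establish uniform $L^\infty$, Lipschitz, and semiconcavity estimates on the unique viscosity solution $u_m$ via the strict monotonicity (H3); solve the continuity equation by pushing $m_0$ forward along the (Ambrosio--Crippa) flow of the BV drift $-\partial_pH(x,u_m,Du_m)$; and close with Schauder, using uniform semiconcavity to pass $Du_{m_n}\to Du_m$ a.e.\ for continuity of the fixed-point map. The only cosmetic differences are: (i) for the $L^\infty$ density bound you argue via the one-sided divergence estimate $\mathrm{div}\,\partial_pH\le C$ (from $D^2u_m\le CI$ and $\partial^2_{pp}H>0$), while the paper obtains the equivalent statement as a one-sided Lipschitz bound on the backward flow, $|x-y|\le C_2|\Phi_m(x,t)-\Phi_m(y,t)|$; and (ii) the paper's fixed-point set $\mathcal D$ carries only the Lipschitz-in-$t$ constraint, whereas your $\mathcal C$ additionally encodes the density bound---harmless but unnecessary, since absolute continuity with the stated bound is automatic for the pushforward.
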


\begin{re}
	\begin{enumerate}[(i)]
		\item In \cite{HW22}, Hu and Wang proved the existence of weak solutions to the stationary contact MFG system
		\begin{equation}\label{2 stationary MFG}
			\left\{\begin{aligned}
				&H(x,u,Du)=F(x,m),\quad &&x\in M,\\
				&\text{div}\left(m\dfrac{\partial H}{\partial p}(x,u,p)\right)=0,\quad &&x\in M,\\
				&\int_M m=1.
			\end{aligned}\right.
		\end{equation}
		under the additional assumption that
		\begin{enumerate}[(R1)]
			\item for any $(x,u,p)\in T^*M\times\mathbb{R}$, $H(x,u,p)=H(x,u,-p).$
		\end{enumerate}
		Let $(u,m)\in C(M)\times\mathcal{P}(M)$ be a weak solution of the system \eqref{2 stationary MFG}. So $u=u_m$ is the unique viscosity solution to the Hamilton-Jacobi equation in \eqref{2 stationary MFG}, and there exists a Mather measure $\mu_m$ for the Hamiltonian $H(x,u,p)-F(x,m)$ such that $m=\pi_x\sharp\mu_m$, where $\pi_x:T^*M\times\mathbb{R}\to M$ denotes the canonical projection. Note that, under (R1), the Mather set \cite{WWY19a} associated to the Hamiltonian $H(x,u,p)-F(x,m)$ is
		$$\tilde{M}=\{(x,u_m(x),0)\vert\ H(x,u_m(x),0)=F(x,m)\}.$$
		Mather measures are given by the convex combination of Dirac measures $\delta_{\{x,u_m(x),0\}}$.
		
		However, in this paper, the assumption (R1) is not required, as the analysis of Mather measures is unnecessary and the weak solution $m(t)$ evolves with the flow \eqref{4 dfn of flow}.Even in the absence of (R1), the uniform boundedness, equi-Lipschitz continuity and the uniform semi-concavity of $\{u_{m(t)}\}$ stiil hold (see Proposition \ref{3 u_m bound Lip}).

		\item In \cite{CMM24arxiv}, Camilli, Marchi and Mendico proved the existence of weak solutions to the quasi-stationary MFG system
		\begin{equation}\label{2 classical qMFG}
			\left\{
			\begin{aligned}
				&H_0(x,Du,m(t))=c(m(t)),\quad &&x\in M,\ \forall t\in[0,T],\\
				&\partial_t m-\text{div}\left(m\dfrac{\partial H_0}{\partial p}(x,Du,m)\right)=0,\quad &&(x,t)\in M\times(0,T],\\
				&m(0)=m_0,
			\end{aligned}\right.
		\end{equation}
		where $H_0$ is a non-separable Hamiltonian, which means that the Hamiltonian is dependent on $m$, and $c(m(t))$ is the Ma\~n\'e critical value \cite{M97} of $H_0$, under the additional assumption that
		\begin{enumerate}[(R2)]
			\item for any $m\in C([0,T];\mathcal{P}(M))$, there exists a unique $x_m\in M$ such that
			$$x_m\in\bigcap_{t\in[0,T]}\mathcal{A}_0^{m(t)},$$
			where for any fixed $t\in [0,T]$, $\mathcal{A}_0^{m(t)}$ is the projected Aubry set (see \cite{F08}) of the Hamiltonian $H_0(x,p,m(t))$.
		\end{enumerate}
		In \cite{CMM24arxiv}, a weak solution to the system is constructed as a fixed point of the following map. Given a flow of absolutely continuous measures $m(t)$, one considered $h_{m(t)}(x_m,\cdot)$, where $x_m$ is as in assumption (R2) and $h_{m(t)}$ is the Peierls barrier associated to the Hamiltonian $H_0(x,p,m(t))$, and to it associated, the pushforward of $m_0$ with respect to the flow with drift $Dh_{m(t)}(x_m,\cdot)$.
		
		In our setting, due to assumption $(H3)$, given the flow of measures $m(t)$, the Hamilton-Jacobi equation admits a unique viscosity solution $u_{m(t)}$, whose properties are recalled in Proposition \ref{3 u_m bound Lip}, Proposition \ref{3 u lip in t}.
	\end{enumerate}
\end{re}

Subsequently, we consider the system
\begin{equation*}
	(qMFG_0)\qquad \left\{\begin{aligned}
		&H(x,0,Dv)=F(x,\mu(t))+c(\mu(t)),\quad &&x\in M,\ \forall t\in[0,T],\\
		&\partial_t\mu-\text{div}\left(\mu\dfrac{\partial H}{\partial p}(x,0,Dv)\right)=0,\quad &&(x,t)\in M\times(0,T],\\
		&\mu(0)=m_0,
	\end{aligned}\right.
\end{equation*}
where the existence result was proved in \cite{CMM24arxiv} under assumption (R2). Let $(\bar{v},\bar{\mu})$ be the weak solution of $(qMFG_0)$. We aim to establish the selection criterion for the limit of the family $(u^\lambda,m^\lambda)$, and identify the connection between the limit and $(\bar{v},\bar{\mu})$. To proceed. we need an additional assumption:
\begin{enumerate}[(H4)]
	\item For any $m\in C([0,T];\mathcal{P}(M))$, there exists a unique point $x_m\in M$ such that 
	$$\mathcal{A}^{m(t)}=\{x_m\},\ \text{for any}\ t\in[0,T],$$
	where $\mathcal{A}^m$ is the projected Aubry set of $H(x,0,p)-F(x,m)$.
\end{enumerate}

\begin{re}
	Note that, under (H4), for any $m\in\mathcal{P}(M)$, the equation $H(x,0,Dv)=F(x,m)$ admits a unique viscosity solution up to additive constants.
\end{re}

\begin{re}
	It is clear that the assumption (H4) is stronger than (R2). Nonetheless, all the examples presented in \cite{CMM24arxiv} fit our assumption (H4). We recall them briefly. Let $H(x,0,p)=\dfrac{\vert p\vert^2}{2}-F(x,m)$, where $F$ is defined on $M\times\mathcal{P}(M)$, satisfying (F1)-(F3), and for any $m\in\mathcal{P}(M)$, 
	$$F(0,m)=0,\quad F(x,m)>0,\ \forall x\in M\setminus\{0\}.$$
	For example, given a function $\kappa\in C^2(M\times M)$ with $\kappa(0,y)\equiv0$ and $\kappa(x,y)>0$ in $M\setminus\{0\}\times M$, the function
	$$F(x,m)=\int_M \kappa(x,y)dm(y).$$
	In this case, we have
		$$c(m)=0,\ \mathcal{A}^m=\{0\},\quad \forall m\in\mathcal{P}(M).$$
\end{re}

In the system $(qMFG_\lambda)$, the contact Hamilton-Jacobi equation with an additional term $c(m)$
$$H(x,u,Du)=F(x,m)+c(m),\quad x\in M,$$
 admits a unique viscosity solution \cite[Theorem B.1]{WWY19a} under a weaker monotonicity assumption:
\begin{itemize}
	\item[(H3')] There exists a constant $\Lambda>0$ such that for all $(x,u,p)\in T^*M\times\mathbb{R}$,$$0<\dfrac{\partial H}{\partial u}(x,u,p)\le \Lambda.$$
\end{itemize}
Under (H3'), the existence result for $(qMFG_\lambda)$ still holds. See the proof in the Appendix.

Finally, we present the following selection criterion for the limit, in which we construct a weak solution of $(qMFG_0)$ in a way that differs from the approach \cite{CMM24arxiv}.
\begin{thm}\label{main result 2}
	Assume (H1), (H2), (H3'), (H4), (F1)-(F3) and (P). For any $\lambda>0$, denote by $(u^\lambda,m^\lambda)\in C(M\times[0,T])\times C([0,T];\mathcal{P}(M))$ a weak solution of $(qMFG_\lambda)$. Then, for any $\bar{x}\in M$, as $\lambda\to0$, the family $u^\lambda-u^\lambda(\bar{x},\cdot)$ uniformly converges (up to a subsequence) to the function $\bar{v}-\bar{v}(\bar{x},\cdot)\in C(M\times[0,T])$, and the family $m^\lambda$ (up to a subsequence) satisfies that 
	$$\lim_{\lambda\to0}\sup_{t\in[0,T]}d_1(m^\lambda(t),\bar{\mu}(t))=0,$$ for some $\bar{\mu}\in C([0,T];\mathcal{P}(M))$.
	Moreover, $(\bar{v},\bar{\mu})$ is a weak solution of $(qMFG_0)$ with Ma\~n\'e critical value $c(\bar{\mu})$, and $$\bar{v}-\bar{v}(\bar{x},\cdot)=h_{\bar{\mu}(\cdot)}(x_{\bar{\mu}},\cdot)-h_{\bar{\mu}(\cdot)}(x_{\bar{\mu}},\bar{x}).$$
\end{thm}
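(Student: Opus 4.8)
The plan is to first gain enough compactness on the families $\{u^\lambda\}$ and $\{m^\lambda\}$ to extract limits along a subsequence, then to pass to the limit separately in the stationary contact Hamilton--Jacobi equation and in the continuity equation of $(qMFG_\lambda)$, and finally to use the one-point Aubry set assumption (H4) to pin down the limit as the Peierls barrier based at $x_{\bar\mu}$.

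\textbf{Step 1: uniform estimates and compactness.} First I would show that $\{u^\lambda\}$ is uniformly bounded in $C(M\times[0,T])$. For fixed $t$ let $v_{m^\lambda(t)}$ be the viscosity solution of $H(x,0,Dv)=F(x,m^\lambda(t))+c(m^\lambda(t))$ normalized by $\max_M v_{m^\lambda(t)}=0$; this is exactly the critical equation for $H(x,0,p)-F(x,m^\lambda(t))$, hence solvable by weak KAM theory, and by (H2) and (F1) these solutions are equi-Lipschitz, so $\|v_{m^\lambda(t)}\|_\infty\le C_v$ with $C_v$ independent of $\lambda,t$. Since $\partial H/\partial u>0$, the functions $v_{m^\lambda(t)}\pm C_v$ are respectively a super- and a subsolution of $H(x,\lambda u,Du)=F(x,m^\lambda(t))+c(m^\lambda(t))$ (the sign of $\lambda(v_{m^\lambda(t)}\pm C_v)$ makes the monotonicity in $u$ work in the right direction), so the comparison principle for contact Hamilton--Jacobi equations (cf.\ \cite[Theorem B.1]{WWY19a}) gives $\|u^\lambda(\cdot,t)\|_\infty\le 2C_v$; in particular $\lambda u^\lambda\to0$ uniformly. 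As in Proposition~\ref{3 u_m bound Lip}, assumptions (H1)--(H2) together with the bounds on $\lambda u^\lambda$ and on $F+c$ then yield that $\{u^\lambda(\cdot,t)\}$ is equi-Lipschitz in $x$ and uniformly semiconcave and that the drift $\partial H/\partial p(x,\lambda u^\lambda,Du^\lambda)$ is uniformly bounded; by Lemma~\ref{4 push-for ac bound} and the argument of Theorem~\ref{main result 1}(II) this gives that $\{m^\lambda\}$ is equi-Lipschitz in $t$ with uniformly bounded densities. Equicontinuity in $t$ of the normalized family $\{u^\lambda-u^\lambda(\bar x,\cdot)\}$ then follows from the equi-Lipschitz continuity of $\{m^\lambda\}$ in $t$, from $F$ being $d_1$-Lipschitz in $m$ by (F3), from the Ma\~n\'e critical value being $1$-Lipschitz with respect to the sup-norm of the Hamiltonian, and from (H4), which keeps the Aubry point of $H(x,0,p)-F(x,m^\lambda(t))$ fixed as $t$ varies. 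By Arzel\`a--Ascoli, along a subsequence $u^\lambda-u^\lambda(\bar x,\cdot)\to w$ in $C(M\times[0,T])$ and $m^\lambda\to\bar\mu$ in $C([0,T];\mathcal P(M))$, with $\bar\mu$ Lipschitz in $t$, each $\bar\mu(t)$ absolutely continuous with bounded density, and $\bar\mu(0)=m_0$.

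\textbf{Step 2: passing to the limit in $(qMFG_\lambda)$.} Since $\lambda u^\lambda\to0$ uniformly, $F(\cdot,m^\lambda(t))\to F(\cdot,\bar\mu(t))$ uniformly by (F3) and $c(m^\lambda(t))\to c(\bar\mu(t))$ by the Lipschitz dependence of the critical value on $F$, the stability of viscosity solutions gives that $w(\cdot,t)$ (hence, after a $t$-dependent shift, $\bar v(\cdot,t)$) is a viscosity solution of $H(x,0,D\bar v)=F(x,\bar\mu(t))+c(\bar\mu(t))$ for every $t$. For the continuity equation, the point is that $u^\lambda(\cdot,t)\to\bar v(\cdot,t)$ uniformly while the $u^\lambda(\cdot,t)$ are uniformly semiconcave, so their gradients converge $\mathcal L$-a.e.\ on $M\times[0,T]$; hence the drifts $b^\lambda:=\partial H/\partial p(x,\lambda u^\lambda,Du^\lambda)$ converge a.e.\ to $\bar b:=\partial H/\partial p(x,0,D\bar v)$ and, being uniformly bounded, also in every $L^p(M\times[0,T])$, $p<\infty$, while the densities of $m^\lambda$ are bounded in $L^\infty$ and converge weak-$*$ to that of $\bar\mu$. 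Inserting this into the distributional formulation of the continuity equation, the only delicate term $\iint D\varphi\cdot b^\lambda\,dm^\lambda\,dt$ converges to $\iint D\varphi\cdot\bar b\,d\bar\mu\,dt$ by pairing the weak-$*$ $L^\infty$ convergence of the densities with the strong $L^1$ convergence of $D\varphi\cdot b^\lambda$. Thus $\bar\mu$ is a distributional solution of $\partial_t\bar\mu-\mathrm{div}(\bar\mu\,\partial H/\partial p(x,0,D\bar v))=0$ with $\bar\mu(0)=m_0$, and together with the Hamilton--Jacobi part and the definition of $c(\bar\mu(t))$ this shows $(\bar v,\bar\mu)$ is a weak solution of $(qMFG_0)$.

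\textbf{Step 3: identification with the Peierls barrier.} By (H4) applied to $\bar\mu\in C([0,T];\mathcal P(M))$ there is a unique $x_{\bar\mu}\in M$ with $\mathcal A^{\bar\mu(t)}=\{x_{\bar\mu}\}$ for all $t$. Since $x_{\bar\mu}$ lies in the projected Aubry set of $H(x,0,p)-F(x,\bar\mu(t))$, the Peierls barrier $x\mapsto h_{\bar\mu(t)}(x_{\bar\mu},x)$ is a viscosity solution of $H(x,0,Dw)=F(x,\bar\mu(t))+c(\bar\mu(t))$; by (H4) this equation has a unique solution up to an additive constant, so $\bar v(\cdot,t)-h_{\bar\mu(t)}(x_{\bar\mu},\cdot)$ is constant in $x$. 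Subtracting its value at $\bar x$ gives $\bar v-\bar v(\bar x,\cdot)=h_{\bar\mu(\cdot)}(x_{\bar\mu},\cdot)-h_{\bar\mu(\cdot)}(x_{\bar\mu},\bar x)$, and since $u^\lambda-u^\lambda(\bar x,\cdot)\to\bar v-\bar v(\bar x,\cdot)$ uniformly while $\sup_{t}d_1(m^\lambda(t),\bar\mu(t))\to0$ along the subsequence, all the assertions follow. I expect the two genuinely delicate points to be the uniform-in-$\lambda$ equicontinuity in $t$ of $\{u^\lambda-u^\lambda(\bar x,\cdot)\}$ in Step 1 --- the comparison principle by itself controls $u^\lambda$ only up to $O(\lambda^{-1})$ perturbations of the right-hand side, so one must exploit that the data stays at the critical level and that (H4) fixes the Aubry point --- and the passage to the limit in the continuity equation in Step 2, which hinges on the a.e.\ convergence of the gradients of the uniformly semiconcave functions $u^\lambda$ and on the weak-$*$/strong pairing of densities and drifts.
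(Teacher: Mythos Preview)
Your overall architecture (compactness, stability in the HJ equation, limit in the continuity equation, identification via the one-point Aubry set) matches the paper's, and your sub/supersolution argument for the uniform bound $\|u^\lambda\|_\infty\le 2C_v$ is correct and in fact slicker than what the paper does in its Appendix. Your Step~3 is exactly the paper's argument.

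The one genuine gap is the point you yourself flag: the uniform-in-$\lambda$ equicontinuity in $t$ of $u^\lambda-u^\lambda(\bar x,\cdot)$. You list the ingredients (equi-Lipschitz $m^\lambda$, Lipschitz $F$ and $c$, fixed Aubry point) but give no mechanism that turns them into a modulus independent of $\lambda$. Under (H3') the comparison route only yields $\|u^\lambda(\cdot,t)-u^\lambda(\cdot,s)\|_\infty\le O(\lambda^{-1})\cdot\|F(\cdot,m^\lambda(t))-F(\cdot,m^\lambda(s))\|_\infty$, and ``(H4) fixes the Aubry point'' is a statement about the $\lambda=0$ equation, not about the contact equation with $\lambda>0$; it does not by itself control how $u^\lambda$ varies with $t$. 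The paper closes this gap with a compactness/contradiction argument (Proposition~\ref{5 regular of v lambda}): assuming the modulus fails, one extracts sequences $m_n\to m$, $\lambda_n\to\lambda\in[0,1]$, $t_n,t_n+h_n\to\tilde t$, and uses stability of viscosity solutions; if $\lambda>0$ the limit equation has a \emph{unique} solution so the two limits agree, while if $\lambda=0$ the two limits solve the same critical equation and, after subtracting their common value at the Aubry point $x_m$, coincide by (H4). This is the step where (H4) is actually used, and it is not the quantitative route your Step~1 suggests; you should replace the hand-wave by this contradiction argument (or something equivalent).

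For the continuity equation you take a different but legitimate route: you pass to the limit in the distributional formulation via the weak-$*$ $L^\infty$ convergence of densities paired with strong $L^1$ convergence of the drifts. The paper instead stays at the level of the Lagrangian flow: it writes $m^\lambda=\Phi^\lambda(\cdot,t)\sharp m_0$, uses the a.e.\ convergence $Du^\lambda\to D\bar v$ (from uniform semiconcavity) to get $\Phi^\lambda\to\overline\Phi$ and hence $m^\lambda\to\overline\Phi\sharp m_0$, and then identifies this limit with $\bar\mu$. Both approaches work; yours requires the additional (routine) identification of the weak-$*$ $L^\infty$ limit of the densities with the density of $\bar\mu$, while the paper's avoids that but leans on the DiPerna--Lions/Ambrosio--Crippa flow machinery already set up in Section~4.
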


Although the Hamilton-Jacobi equation in $(qMFG_\lambda)$ is stationary, the equi-continuity result of $u(x,\cdot)$ is crucial in proving the convergence (see Proposition \ref{5 regular of v lambda}). As $\lambda\to 0$, we do not have the sufficient information on the convergence of the family $\{u^\lambda\}$, but we investigate the convergence of the family $\{u^\lambda-u^\lambda(\bar{x},\cdot)\}$.

\section{Contact Hamilton-Jacobi equations}
In this section, we consider the contact Hamilton-Jacobi equation 
\begin{equation}\label{cH-J without m}
	H(x,u,Du)=c,\quad x\in M,
\end{equation} for some constant $c\in\mathbb{R}$, and review some weak KAM results for the contact Hamiltonian system.

\begin{dfn}
	A continuous function $u:M\to \mathbb{R}$ is a viscosity subsolution of \eqref{cH-J without m}, if for any $\varphi\in C^1(M)$ and any points $y\in M$ such that $u-\varphi$ attains the local maximum at $y$, we have
	$$H(y,u(y),D\varphi(y))\le c.$$
	
	A continuous function $u:M\to \mathbb{R}$ is a viscosity supersolution of \eqref{cH-J without m}, if for any $\varphi\in C^1(M)$ and any points $y\in M$ such that $u-\varphi$ attains the local minimum at $y$, we have
	$$H(y,u(y),D\varphi(y))\ge c.$$
	
	A continuous function $u:M\to \mathbb{R}$ is a viscosity solution of \eqref{cH-J without m}, if $u$ is both a viscosity subsolution and a viscosity supersolution of \eqref{cH-J without m}.
\end{dfn}

It is well-known that there exists a viscosity solution to the classical Hamilton-Jacobi equation
\begin{equation}\label{H-J}
	H_0(x,Du)=c,\quad x\in M,
\end{equation} 
if and only if the constant $c$ in \eqref{H-J} is equal to the Ma\~n\'e critical value $c(H_0)$, as introduced by Ma\~n\'e \cite{M97}. There are some other formulas of $c(H_0)$. In \cite{CIPP98}, we have $$c(H_0)=\inf_{\varphi\in C^1(M)}\max_{x\in M}H_0(x,D\varphi(x)).$$ In \cite{F08}, it is given by
$$c(H_0)=-\inf_\mu\int_{TM}L_0(x,q)d\mu,$$
where $L_0:TM\to\mathbb{R}$ is the Lagrangian associated to the Hamiltonian $H_0$, and the infimum is taken among Borel probability measures on $TM$ invariant under the Euler-Lagrange flow.

On the other hand, there exists more than one constant that makes the equation \eqref{H-J} admit a viscosity solution. We denote the set of these constants by $\mathcal{G}$, and the construction of $\mathcal{G}$ is dependent on the monotonicity of $H$ with respect to $u$. For results when $-\Lambda\le\frac{\partial H}{\partial u}\le\Lambda$, see \cite{WY21arxiv,WWY19b}. If $0\le\frac{\partial H}{\partial u}\le\Lambda$, then we have $$\mathcal{G}=\{c(H(x,a,p))\vert a\in\mathbb{R}\},$$
where $c(H(x,a,p))$ is the Ma\~n\'e critical value of $H(x,a,p)$ \cite{SWY16, H25}. Under (H1)-(H3), we have $\mathcal{G}=\mathbb{R}$ \cite[Remark 1.2]{WWY19a}, which means that for any $c\in\mathbb{R}$, the equation \eqref{cH-J without m} admits a viscosity solution, and the viscosity solution is unique \cite[Proposition A.1]{WWY19a}. Moreover, the unique viscosity solution is Lipschitz continuous on $M$.

Given $m\in C([0,T];\mathcal{P}(M))$, consider the equation \eqref{cH-J with m} $$H(x,u,Du)=F(x,m(t)),\quad\ x\in M,\ \forall t\in[0,T].$$ By the uniqueness of viscosity solutions, for any fixed $t\in[0,T]$, the equation \eqref{cH-J with m} admits a unique viscosity solution denoted by $u_{m(t)}$. According to \cite[Appendix B]{WWY19a}, for all $m(t)\in\mathcal{P}(M)$, there is a constant $a_{m(t)}$ such that 
$$\inf_{\varphi\in C^1(M)}\max_{x\in M}(H(x,a_{m(t)},D\varphi)-F(x,m(t)))=0.$$
$a_{m(t)}$ is uniformly bounded for any $m\in C([0,T];\mathcal{P}(M))$ and any $t\in[0,T]$ \cite[Lemma 1]{HW22}.

Next, we establish the boundedness and the regularity of $u_{m(t)}$.

\begin{prop}\label{3 u_m bound Lip}
     The family $\{u_{m(t)}\}_{m\in C([0,T];\mathcal{P}(M))}$ is uniformly bounded, equi-Lipschitz continuous and uniformly semi-concave on $M$.
\end{prop}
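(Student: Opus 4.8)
The plan is to prove the three assertions in the order stated, each one feeding into the next: comparison with constant sub/supersolutions for the uniform $L^\infty$-bound, the coercivity of $H$ for the equi-Lipschitz bound, and the contact weak KAM variational representation of $u_{m(t)}$ for the uniform semiconcavity. Throughout, the point is to keep track of the fact that all structural constants entering the estimates depend only on $C_H$, $\tau$, $\Lambda$, the $C^3$-norm of $H$ and the $C^2$-norm of $F(\cdot,m)$, which by (H1)--(H3) and (F1) are uniform in $m$ and $t$.

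For the first two properties I would argue as follows. Set $\tilde H_{m,t}(x,u,p):=H(x,u,p)-F(x,m(t))$; by (F1) the quantity $C_0:=\sup_{x,m}|F(x,m)|$ is finite, so $\tilde H_{m,t}$ is again a contact Hamiltonian satisfying (H1)--(H3) with the \emph{same} constants, and $u_{m(t)}$ is its unique viscosity solution at level $0$. From (H3), $H(x,u,0)\ge H(x,0,0)+\tau u$ for $u\ge0$ and $H(x,u,0)\le H(x,0,0)+\tau u$ for $u\le0$, so with $B:=\max_x|H(x,0,0)|$ the constants $k_\pm:=\pm(C_0+B)/\tau$ are, respectively, a viscosity super- and subsolution of $\tilde H_{m,t}(x,u,Du)=0$ (at a contact point the gradient of a constant minus a $C^1$ test function vanishes, $M$ having no boundary). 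The comparison principle for \eqref{cH-J without m} under (H3) (\cite[Proposition A.1]{WWY19a} applied to $\tilde H_{m,t}$) then gives $k_-\le u_{m(t)}\le k_+$, a bound independent of $m$ and $t$ (consistent with the uniform bound on $a_{m(t)}$). Since $u_{m(t)}$ is a viscosity subsolution and, being a viscosity solution of a coercive equation on a compact manifold, is Lipschitz, at a.e.\ $x$ one has $H\big(x,u_{m(t)}(x),Du_{m(t)}(x)\big)\le F(x,m(t))\le C_0$; by the superlinearity (H2) and compactness of $M$ the set $\{p\in T^*_xM:\ H(x,u,p)\le C_0\ \text{for some}\ x\in M,\ |u|\le|k_+|\}$ is bounded by some $\kappa=\kappa(C_0,k_+,H)$, whence $|Du_{m(t)}|\le\kappa$ a.e.\ and $u_{m(t)}$ is $\kappa$-Lipschitz with $\kappa$ independent of $m,t$.

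For the uniform semiconcavity I would use the contact weak KAM machinery of \cite{WWY19a}. The contact Lagrangian associated with $\tilde H_{m,t}$ is $\tilde L_{m,t}(x,u,q)=L(x,u,q)+F(x,m(t))$, which still satisfies (L1)--(L3) with the same constants, and $u_{m(t)}$ is the fixed point of the corresponding Lax--Oleinik-type semigroup $T^-_{m,t,s}$; in particular, taking $s=1$,
$$u_{m(t)}(x)=\inf_{z\in M}h_{m,t}\big(z,u_{m(t)}(z);1,x\big),$$
where $h_{m,t}$ is the implicit (Herglotz) action of $\tilde L_{m,t}$. By the uniform $L^\infty$- and Lipschitz-bounds just obtained together with the superlinearity of $L$, the minimizing characteristics realizing this infimum stay in a compact set $K\subset TM\times\mathbb{R}$ independent of $m,t$, on which $L$ ($C^3$) and $F(\cdot,m(t))$ ($C^2$) have norms bounded uniformly in $m,t$. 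The semiconcavity estimates for the contact action function in \cite{WWY19a} then give, for each fixed $z$, that $x\mapsto h_{m,t}(z,u_{m(t)}(z);1,x)$ is semiconcave with a \emph{linear} modulus whose constant $C$ depends only on $C_H$, on the $C^3$-norm of $L$ and the $C^2$-norm of $F(\cdot,m)$ on $K$, and on the (fixed) time $1$ being bounded away from $0$ — hence not on $m,t$. Since an infimum of functions all semiconcave with the same constant $C$ is semiconcave with constant $C$, the family $\{u_{m(t)}\}$ is uniformly semiconcave.

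The main obstacle is the last step: making the semiconcavity modulus genuinely uniform in $(m,t)$. This rests on (i) confining the minimizing curves of the Herglotz action to a fixed compact set, which is exactly where the uniform bounds of the previous two steps are indispensable, and (ii) checking that the semiconcavity constant of the implicit action in \cite{WWY19a} is controlled purely by the (uniform) convexity constant and the (uniform) $C^3$/$C^2$ norms of $L$ and $F$ on that compact set together with a lower bound on the time horizon. Because $L$ depends on $u$ and the Herglotz action is defined implicitly, carefully tracking these dependencies is where the real care is needed, but no idea beyond the contact weak KAM estimates of \cite{WWY19a} is required; Steps 1 and 2 are by comparison routine.
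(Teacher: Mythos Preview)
Your argument is correct, but it differs in emphasis from the paper's, which is essentially an outline of citations to \cite{HW22} and \cite{L82}. For the uniform bound the paper first produces, for each $m$, the constant $a_m$ with $\inf_{\varphi\in C^1}\max_x\big(H(x,a_m,D\varphi)-F(x,m)\big)=0$, invokes its uniform boundedness from \cite[Lemma~1]{HW22}, and then passes through the Lax--Oleinik semigroup of the \emph{classical} Lagrangian $L(x,a_m,q)+F(x,m)$ via \cite[Proposition~9]{HW22}; your direct comparison with the constant sub/supersolutions $k_\pm=\pm(C_0+B)/\tau$ is shorter and exploits the strict monotonicity (H3) more efficiently. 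For the Lipschitz bound the two arguments are essentially the same (the paper cites \cite[Lemma~5]{HW22}, which is the coercivity argument you spell out). The real divergence is semiconcavity: the paper simply invokes the PDE result \cite[Theorem~3.3]{L82}, which gives a linear modulus directly from (H1) and the uniform $C^2$-bound on $F(\cdot,m)$ in (F1), whereas you go through the contact Herglotz action of \cite{WWY19a} and take an infimum of uniformly semiconcave functions. Your route works, but is heavier precisely where you flag it---tracking the semiconcavity constant of the implicit action through its dependence on the data---while the paper's PDE citation bypasses that bookkeeping entirely.
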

\begin{proof}[Outline of proof]
	For simplicity, we prove that for $m\in \mathcal{P}(M)$, the viscosity solution $u_m$ is uniformly bounded, equi-Lipschitz continuous and uniformly semi-concave on $M$, uniformly with respect to $m$.
	
	To prove the uniform boundedness, by the aforementioned arguments, for any $m\in\mathcal{P}(M)$, there exists a constant $a_m\in\mathbb{R}$ such that 
	$$\inf_{\varphi\in C^1(M)}\max_{x\in M}(H(x,a_m,D\varphi)-F(x,m))=0,$$
	and $a_m$ is uniformly bounded \cite[Lemma 1]{HW22}. Then we get the uniform boundedness of $u_m$ by utilizing the Lax-Oleinik semigroup of the Lagrangian $L(x,a_m,q)-F(x,m)$ \cite[Proposition 9]{HW22}.
	
	The equi-Lipschitz continuity is the consequence of \cite[Lemma 5]{HW22}, and the uniform semi-concavity follows from \cite[Theorem 3.3]{L82}.
\end{proof}

In the following proposition, we establish the regularity of $u_{m(\cdot)}$.
\begin{prop}\label{3 u lip in t}
	For any $m\in C([0,T];\mathcal{P}(M))$, the viscosity solution $u_{m(\cdot)}$ is continuous on $[0,T]$. Especially, if $m$ is Lipschitz continuous on $[0,T]$, then $u_{m(\cdot)}$ is Lipschitz continuous on $[0,T]$.
\end{prop}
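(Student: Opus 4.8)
The plan is to reduce the time-regularity of the map $t\mapsto u_{m(t)}$ to the \emph{quantitative} dependence of the unique viscosity solution of the stationary equation on its right-hand side, and then to feed in the Lipschitz continuity of $F$ in the measure variable (F3) together with the (Lipschitz) continuity of $t\mapsto m(t)$.

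The key lemma I would establish first is the stability estimate: if $f_1,f_2\in C(M)$ and $w_i$ denotes the unique viscosity solution of $H(x,w_i,Dw_i)=f_i$ for $i=1,2$, then
\[
\|w_1-w_2\|_\infty\le\frac{1}{\tau}\|f_1-f_2\|_\infty ,
\]
where $\tau$ is the monotonicity constant from (H3). To prove it, set $\delta:=\|f_1-f_2\|_\infty$ and consider the shifted function $w:=w_2+\delta/\tau$. If $\varphi\in C^1(M)$ and $w-\varphi$ has a local minimum at $y$, then $w_2-\varphi$ also has a local minimum at $y$, hence $H(y,w_2(y),D\varphi(y))\ge f_2(y)$; by the mean value theorem in the $u$-variable and (H3),
\[
H(y,w(y),D\varphi(y))-H(y,w_2(y),D\varphi(y))=\frac{\partial H}{\partial u}(y,\xi,D\varphi(y))\,\frac{\delta}{\tau}\ge\delta
\]
for some $\xi$ between $w_2(y)$ and $w_2(y)+\delta/\tau$, so that $H(y,w(y),D\varphi(y))\ge f_2(y)+\delta\ge f_1(y)$. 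Thus $w$ is a viscosity supersolution of $H(x,u,Du)=f_1$, and the comparison principle for contact Hamilton--Jacobi equations under (H3) (the uniqueness statement \cite[Proposition A.1]{WWY19a}) yields $w_1\le w_2+\delta/\tau$. Exchanging the roles of $w_1$ and $w_2$ gives the estimate.

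I would then apply this with $f_1=F(\cdot,m(t))$, $f_2=F(\cdot,m(s))$, and invoke (F3) to obtain
\[
\|u_{m(t)}-u_{m(s)}\|_\infty\le\frac{1}{\tau}\|F(\cdot,m(t))-F(\cdot,m(s))\|_\infty\le\frac{C_F}{\tau}\,d_1(m(t),m(s)).
\]
Since $m\in C([0,T];\mathcal{P}(M))$, the right-hand side tends to $0$ as $s\to t$, which gives continuity of $t\mapsto u_{m(t)}$ into $C(M)$; together with the equi-Lipschitz continuity in $x$ from Proposition \ref{3 u_m bound Lip}, this yields joint continuity of $u_{m(\cdot)}$ on $M\times[0,T]$. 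If $m$ is Lipschitz on $[0,T]$, say $d_1(m(t),m(s))\le C_m|t-s|$, the same chain of inequalities gives $\|u_{m(t)}-u_{m(s)}\|_\infty\le (C_FC_m/\tau)|t-s|$, i.e. Lipschitz continuity of $u_{m(\cdot)}$ in $t$, uniformly in $x$.

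The only genuinely delicate point is the shifting argument in the stability lemma: one must check that adding the constant $\delta/\tau$ turns a supersolution into a supersolution of the perturbed equation, and this crucially uses the \emph{uniform} lower bound $\partial H/\partial u\ge\tau$ over the full range of $u$-values involved, which is exactly what (H3) provides (the weaker (H3$'$) would not suffice here). Everything else is a direct application of the comparison principle already available in \cite{WWY19a}, of (F3), and of the modulus of continuity of $m(\cdot)$. An alternative route via the non-expansiveness of the Lax--Oleinik semigroups used in \cite{HW22} is possible, but the comparison argument above is more transparent.
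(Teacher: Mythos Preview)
Your proof is correct and follows essentially the same approach as the paper: the paper constructs $v^\pm:=u_{m(s)}\pm\frac{1}{\tau}\|F(\cdot,m(t))-F(\cdot,m(s))\|_\infty$, checks these are viscosity super/subsolutions of $H(x,u,Du)=F(x,m(t))$ via the monotonicity lower bound (H3), and then applies the comparison principle together with (F3) to obtain the identical estimate $\|u_{m(t)}-u_{m(s)}\|_\infty\le\frac{C_F}{\tau}d_1(m(t),m(s))$. Your only cosmetic difference is that you package the shifting argument as a general stability lemma before specializing, and your observation that the strict lower bound $\tau>0$ in (H3) is essential here (and that (H3$'$) would not suffice) is exactly right and is acknowledged by the paper immediately after the proof.
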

\begin{proof}
	For any $t,s\in[0,T]$, we consider viscosity solutions $u_{m(t)}$ and $u_{m(s)}$ of the Hamilton-Jacobi equation \eqref{cH-J with m} with fixed time $t$ and $s$, respectively. Define
	\begin{equation*}
		\begin{aligned}
			&v^+:=u_{m(s)}+\dfrac{1}{\tau}\Vert F(\cdot,m(t))-F(\cdot,m(s))\Vert_\infty,\\
			&v^-:=u_{m(s)}-\dfrac{1}{\tau}\Vert F(\cdot,m(t))-F(\cdot,m(s))\Vert_\infty.
		\end{aligned}
	\end{equation*}
	Then we have
	\begin{equation*}
		\begin{aligned}
			&H(x,v^+,Dv^+)\ge H(x,u_{m(s)},Du_{m(s)})+\Vert F(\cdot,m(t))-F(\cdot,m(s))\Vert_\infty\ge F(x,m(t)),\\
			&H(x,v^-,Dv^-)\le H(x,u_{m(s)},Du_{m(s)})-\Vert F(\cdot,m(t))-F(\cdot,m(s))\Vert_\infty\le F(x,m(t)).
		\end{aligned}
	\end{equation*}
	So $v^+$ and $v^-$ are respectively a viscosity supersolution and a viscosity subsolution of 
	$$H(x,u,Du)=F(x,m(t)),\quad x\in M.$$
	By comparison principle and assumption (F3), we have
	\begin{equation}\label{3 u Lip m Lip}
		\Vert u_{m(t)}(\cdot)-u_{m(s)}(\cdot)\Vert_\infty\le \dfrac{C_F}{\tau}d_1(m(t),m(s)),
	\end{equation}
which implies that $u_{m(\cdot)}$ is continuous in $t$.

If $m$ is Lipschitz continuous on $[0,T]$,	by the same argument and inequality \eqref{3 u Lip m Lip}, then $u_{m(\cdot)}$ is Lipschitz on $[0,T]$.
\end{proof}

Under (H3'), we only obtain the continuity of $u_{m(\cdot)}$, not Lipschitz continuity (see Proposition \ref{AA continuity in t}).

\begin{cor}\label{3 Du measurable in t}
	 The set-valued map $t\mapsto D^+u_{m(t)}(x)$ is measurable for all $x\in M$, where $D^+u(x,t)$ is the superdifferential of $u$ in $x$.
\end{cor}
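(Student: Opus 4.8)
The plan is to deduce measurability of $t \mapsto D^+u_{m(t)}(x)$ from the joint regularity of the family $\{u_{m(t)}\}$ established in Propositions \ref{3 u_m bound Lip} and \ref{3 u lip in t}. First I would observe that, by Proposition \ref{3 u lip in t}, the map $t \mapsto u_{m(t)}$ is continuous from $[0,T]$ into $C(M)$, and by Proposition \ref{3 u_m bound Lip} the family $\{u_{m(t)}\}$ is equi-Lipschitz and uniformly semi-concave on $M$. Hence the function $(x,t) \mapsto u_{m(t)}(x)$ is continuous on $M \times [0,T]$, and for each fixed $t$ the map $x \mapsto u_{m(t)}(x)$ is semi-concave with a linear modulus independent of $t$. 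For semi-concave functions the superdifferential $D^+u_{m(t)}(x)$ is a nonempty, convex, compact subset of $T_x^*M$, equal to the closed convex hull of the set of reachable gradients, so the set-valued map is well-defined and compact-valued.

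The key step is to verify measurability in $t$ via the support-function (or equivalently the upper-semicontinuity) characterization. For a convex compact valued map it suffices to show that for each fixed direction $\xi \in T_x^*M$ the scalar map $t \mapsto \sigma(\xi, D^+u_{m(t)}(x)) := \max_{p \in D^+u_{m(t)}(x)} \langle p, \xi\rangle$ is measurable, and in fact I expect to show it is upper semicontinuous. This follows from the standard stability property of superdifferentials of semi-concave functions under uniform convergence: if $t_n \to t$, then $u_{m(t_n)} \to u_{m(t)}$ uniformly (by continuity in $t$) with uniformly bounded semi-concavity constants, so any $p_n \in D^+u_{m(t_n)}(x)$ with $\langle p_n, \xi\rangle \to \limsup_n \sigma(\xi, D^+u_{m(t_n)}(x))$ has a subsequence converging to some $p \in D^+u_{m(t)}(x)$, whence $\limsup_n \sigma(\xi, D^+u_{m(t_n)}(x)) \le \sigma(\xi, D^+u_{m(t)}(x))$. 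Thus $t \mapsto \sigma(\xi, D^+u_{m(t)}(x))$ is upper semicontinuous, hence Borel measurable, for every $\xi$; taking a countable dense set of directions $\xi$ and invoking the Castaing representation / measurable selection theorem for compact convex valued maps (see, e.g., Aubin–Frankowska), we conclude that $t \mapsto D^+u_{m(t)}(x)$ is a measurable set-valued map.

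The main obstacle, such as it is, is to make sure the semi-concavity stability argument is applied correctly on the manifold $M$: one should work in a fixed coordinate chart around $x$ and use that semi-concavity with a linear modulus is a chart-independent notion up to changing the constant, so that $D^+u_{m(t)}(x)$ computed in the chart is the genuine superdifferential in $T_x^*M$. Once this is set up, the compactness of $M$ guarantees the needed uniformity of the semi-concavity constants across $t$, and the rest is the routine functional-analytic packaging above. I would also remark that the measurability is exactly what is needed later to define the drift vector field $\partial_p H(x, u_{m(t)}(x), Du_{m(t)}(x))$ almost everywhere in $t$ and to make sense of the continuity equation \eqref{continuity eq} in the distributional sense.
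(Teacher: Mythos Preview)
Your proposal is correct and follows essentially the same approach as the paper: the paper's proof is a one-line invocation of the continuity of $t\mapsto u_{m(t)}$ (Proposition~\ref{3 u lip in t}) together with the upper semicontinuity of the superdifferential of semi-concave functions, citing \cite[Proposition~3.3.4]{CS04}. You have simply unpacked that citation, making explicit the stability of $D^+$ under uniform convergence with uniform semi-concavity bounds and packaging the conclusion via support functions and measurable selection; this is more detailed than necessary but entirely in line with the paper's intended argument.
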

\begin{proof}
	The corollary is a standard consequence of Proposition \ref{3 u lip in t} and the semi-concavity of $u$ in $x$ \cite[Proposition 3.3.4]{CS04}.
\end{proof}

\section{Existence result}
In this section, we study the continuity equation and prove Theorem \ref{main result 1}.

From the regularities of $u_m$ and the measurability of $D^+u_m$ in $t$ as proved in Section 3, with \cite[Theorem 2.3.1]{CS04}, the function
$$x\mapsto \dfrac{\partial H}{\partial p}(x,u_{m(t)}(x),Du_{m(t)}(x))$$ 
is of bounded variation on $M$ for any fixed $t\in[0,T]$. Then, \cite[Remark 1.2 and Theorem 3.11]{AC14} ensures that, given $m\in C([0,T];\mathcal{P}(M))$, for $t\in [0,T]$ and $x\in M$, we define the flow 
\begin{equation}\label{4 dfn of flow}
	\Phi_m(x,t)=x-\int_{0}^{t}\dfrac{\partial H}{\partial p}(\Phi_m(x,s),u_{m(s)}(\Phi_m(x,s)),Du_{m(s)}(\Phi_m(x,s)))ds,
\end{equation}
where $u_{m(s)}$ is the viscosity solution of $$H(x,u,Du)=F(x,m(s)),\quad x\in M.$$

\begin{lem}\label{4 flow Lip t}
	For any $m\in C([0,T];\mathcal{P}(M))$, there is a constant $C_1\ge 0$, independent of $m$, such that 
	$$\vert \Phi_m(x,t)-\Phi_m(x,t')\vert \le C_1\vert t-t'\vert,\qquad \forall\ x\in M,\ \forall t,t'\in[0,T].$$
\end{lem}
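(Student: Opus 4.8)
The plan is to bound the integrand in the defining formula \eqref{4 dfn of flow} uniformly in $(x,s,m)$ and then read off the Lipschitz estimate in $t$ directly. Concretely, for $t,t'\in[0,T]$ with $t'\le t$, subtracting the two copies of \eqref{4 dfn of flow} gives
\begin{equation*}
	\Phi_m(x,t)-\Phi_m(x,t')=-\int_{t'}^{t}\frac{\partial H}{\partial p}\bigl(\Phi_m(x,s),u_{m(s)}(\Phi_m(x,s)),Du_{m(s)}(\Phi_m(x,s))\bigr)\,ds,
\end{equation*}
so that $\vert\Phi_m(x,t)-\Phi_m(x,t')\vert\le \int_{t'}^{t}\bigl\vert\frac{\partial H}{\partial p}(\cdots)\bigr\vert\,ds$. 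Hence it suffices to produce a constant $C_1$, independent of $m$, $x$ and $s$, with
\begin{equation*}
	\Bigl\vert\frac{\partial H}{\partial p}\bigl(y,u_{m(s)}(y),Du_{m(s)}(y)\bigr)\Bigr\vert\le C_1\qquad\text{for a.e. }y\in M,\ \forall s,\ \forall m.
\end{equation*}

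The key input is Proposition \ref{3 u_m bound Lip}: the family $\{u_{m(s)}\}$ is uniformly bounded and equi-Lipschitz on $M$, so there is a constant $R>0$ with $\Vert u_{m(s)}\Vert_\infty\le R$ and, at every point of differentiability, $\vert Du_{m(s)}(y)\vert\le R$, uniformly in $s$ and $m$. Therefore the relevant arguments $(y,u_{m(s)}(y),Du_{m(s)}(y))$ all lie in the compact set $K:=M\times[-R,R]\times\{\vert p\vert\le R\}\subset T^*M\times\mathbb{R}$ (using local coordinates / a finite atlas on the compact manifold $M$, or simply $\mathbb{T}^d$). Since $H$ is $C^3$, the continuous function $(y,u,p)\mapsto\bigl\vert\frac{\partial H}{\partial p}(y,u,p)\bigr\vert$ attains a finite maximum $C_1$ on $K$, and this $C_1$ depends only on $H$ and $R$, hence only on $H$ (through (H1)--(H3), (F1)--(F3)), not on $m$. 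Plugging this bound into the integral estimate yields $\vert\Phi_m(x,t)-\Phi_m(x,t')\vert\le C_1\vert t-t'\vert$ for all $x\in M$ and all $t,t'\in[0,T]$, which is the claim. Note one must check $\Phi_m(x,s)$ is a.e.\ well-defined as a point where $u_{m(s)}$ is differentiable; since $u_{m(s)}$ is semi-concave it is differentiable $\mathcal{L}$-a.e., and the regular-Lagrangian-flow theory cited before \eqref{4 dfn of flow} (\cite[Theorem 3.11]{AC14}) guarantees the flow map avoids the negligible non-differentiability set for a.e.\ initial point, so the integrand is defined along the trajectory for a.e.\ $x$; for the remaining null set one uses the BV selection already fixed in Section 4.

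The only mild obstacle is bookkeeping about where the integrand is defined: one is evaluating $Du_{m(s)}$ along a trajectory, and a priori this could hit the non-differentiability set of $u_{m(s)}$. This is handled exactly as in the construction of the flow itself (the same reference \cite{AC14} that justifies \eqref{4 dfn of flow}): the vector field $y\mapsto\frac{\partial H}{\partial p}(y,u_{m(s)}(y),Du_{m(s)}(y))$ is BV and bounded (by the estimate above), so a unique regular Lagrangian flow exists and the bound on the vector field transfers verbatim to a Lipschitz-in-time bound on the flow. There is no genuine analytic difficulty here beyond invoking Proposition \ref{3 u_m bound Lip} for the uniform $L^\infty$ bound on $Du_{m(s)}$; everything else is the fundamental theorem of calculus together with compactness and continuity of $D_pH$.
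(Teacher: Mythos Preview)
Your argument is correct and follows essentially the same route as the paper: subtract the two instances of \eqref{4 dfn of flow}, invoke Proposition \ref{3 u_m bound Lip} to confine $(y,u_{m(s)}(y),Du_{m(s)}(y))$ to a compact set, and use continuity of $\partial H/\partial p$ there to bound the integrand uniformly. The paper's proof is just a two-line version of what you wrote; your additional remarks on the a.e.\ well-definedness of the integrand along trajectories are more careful than the original but not a different idea.
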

\begin{proof}
	By Proposition \ref{3 u_m bound Lip} and the compactness of $M$, there exists a constant $C_1\ge0$ such that for any $m\in C([0,T];\mathcal{P}(M))$, we have 
	\begin{equation*}
		\begin{aligned}
			&\vert \Phi_m(x,t)-\Phi_m(x,t')\vert\\
			\le &\left\vert\int_{t'}^{t}\dfrac{\partial H}{\partial p}(\Phi_m(x,s),u_{m(s)}(\Phi_m(x,s)),Du_{m(s)}(\Phi_m(x,s)))ds\right\vert\\
			 \le &C_1\vert t-t'\vert,\qquad \forall\ x\in M,\ \forall t,t'\in [0,T],
		\end{aligned}
	\end{equation*}
	where the second inequality is due to Proposition \ref{3 u_m bound Lip}. The proof is complete.
\end{proof}

\begin{lem}\label{4 flow inverse Lip x}
	For any $m\in C([0,T];\mathcal{P}(M))$, there exists a constant $C_2\ge 0$, independent of $m$, such that 
	$$\vert x-y\vert \le C_2\vert \Phi_m(x,t)-\Phi_m(y,t)\vert,\qquad \forall t\in[0,T],\ \forall x,y\in M.$$
\end{lem}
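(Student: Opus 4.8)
The plan is a Gronwall estimate run backwards in time along the two flow trajectories. Fix $m\in C([0,T];\mathcal P(M))$, $t\in[0,T]$, $x,y\in M$, and set $\xi(s):=\Phi_m(x,s)$, $\eta(s):=\Phi_m(y,s)$, $u_s:=u_{m(s)}$, and
\[
b(z,s):=-\frac{\partial H}{\partial p}\big(z,u_s(z),Du_s(z)\big),
\]
so that $\dot\xi(s)=b(\xi(s),s)$ and $\dot\eta(s)=b(\eta(s),s)$ for a.e.\ $s\in[0,t]$. By Proposition \ref{3 u_m bound Lip} the family $\{u_s\}_{s\in[0,T]}$ is uniformly bounded and equi-Lipschitz, so all the triples $(z,u_s(z),Du_s(z))$ lie in a single compact set $K\subset T^*M\times\mathbb R$ depending only on $H$ and $F$; on $K$ the field $b$ is bounded, which is already what makes $\xi,\eta$ Lipschitz in $s$ (cf.\ Lemma \ref{4 flow Lip t}), so $g(s):=|\xi(s)-\eta(s)|^2$ is absolutely continuous. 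I would prove the differential inequality $g'(s)\ge -C\,g(s)$ for a.e.\ $s\in[0,t]$, with $C$ depending only on $C_H$, the uniform semi-concavity constant of $\{u_s\}$ from Proposition \ref{3 u_m bound Lip}, and $\sup_K\|DH\|$, $\sup_K\|D^2H\|$ — hence on neither $m$ nor $t$. Granting it, $e^{Cs}g(s)$ is nondecreasing, so $|x-y|^2=g(0)\le e^{Ct}g(t)\le e^{CT}|\Phi_m(x,t)-\Phi_m(y,t)|^2$, and $C_2:=e^{CT/2}$ works; note that if $g(s_0)=0$ for some $s_0\in[0,t]$ the same monotonicity forces $g\equiv0$ on $[0,s_0]$, i.e.\ $x=y$, so the zero set of $g$ is harmless.

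To get the differential inequality, for a.e.\ $s$ write $g'(s)=2\langle\xi-\eta,\ b(\xi,s)-b(\eta,s)\rangle$, pick $p_\xi:=Du_s(\xi)$, $p_\eta:=Du_s(\eta)$ (defined for a.e.\ $s$, and any elements of $D^+u_s(\xi)$, $D^+u_s(\eta)$ will do), and decompose
\[
b(\xi,s)-b(\eta,s)=-\Big[\tfrac{\partial H}{\partial p}(\xi,u_s(\xi),p_\xi)-\tfrac{\partial H}{\partial p}(\eta,u_s(\eta),p_\xi)\Big]-A\,(p_\xi-p_\eta),
\]
where $A:=\int_0^1\tfrac{\partial^2 H}{\partial p^2}\big(\eta,u_s(\eta),p_\eta+r(p_\xi-p_\eta)\big)\,dr$ satisfies $\tfrac{I}{C_H}\le A\le C_H I$ by (H1). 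The first bracket has norm $\le C|\xi-\eta|$, using the Lipschitz dependence of $\tfrac{\partial H}{\partial p}$ on its first two arguments over $K$ and the equi-Lipschitz bound on $u_s$, so it contributes $\ge -C|\xi-\eta|^2$ to $g'(s)$. It remains to bound $-2\langle A(\xi-\eta),\ p_\xi-p_\eta\rangle$ from below by $-C|\xi-\eta|^2$, i.e.\ to bound $\langle A(\xi-\eta),p_\xi-p_\eta\rangle$ from above by $C|\xi-\eta|^2$. If $A$ were scalar this would be immediate from the uniform semi-concavity of $u_s$, which gives $\langle p_\xi-p_\eta,\ \xi-\eta\rangle\le C_0|\xi-\eta|^2$ for any $p_\xi\in D^+u_s(\xi)$, $p_\eta\in D^+u_s(\eta)$.

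The matrix $A$ is exactly where the work is, and I expect it to be the main obstacle: $A$ and the distributional Hessian $D^2u_s$ do not commute, so the bound $\langle p_\xi-p_\eta,\xi-\eta\rangle\le C_0|\xi-\eta|^2$, which only controls $D^2u_s$ from above, does not by itself control $\langle A(\xi-\eta),p_\xi-p_\eta\rangle$. What should be used is the form of semi-concavity natural for viscosity solutions of uniformly convex Hamilton-Jacobi equations: $u_s$ is an infimum of smooth ``fundamental solutions'' (Lax-Oleinik), whose second-order modulus is measured in the metric $(\partial^2_{pp}H)^{-1}=\partial^2_{qq}L$, so that $\partial^2_{pp}H\cdot D^2u_s$ — and therefore the quantity $\langle A(\xi-\eta),p_\xi-p_\eta\rangle$ — is bounded above by $C|\xi-\eta|^2$ with $C$ depending only on $H$ and the semi-concavity constant; at the trace level this is the statement that $\operatorname{div}b(\cdot,s)$ is bounded below, which is what one checks directly from (H1) and Proposition \ref{3 u_m bound Lip}. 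This is the analogue of the corresponding estimate for the flow in \cite{CMM24arxiv}. The remaining points are routine given Section 3: absolute continuity of $g$ (both curves are Lipschitz in $s$), and that along the flow \eqref{4 dfn of flow} one may work with superdifferentials of the semi-concave functions $u_s$ \cite{CS04}, the flow itself being well defined by the Ambrosio-Crippa theory \cite{AC14}.
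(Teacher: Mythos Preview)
Your approach is the paper's: a Gronwall estimate on $g(s)=|\Phi_m(x,s)-\Phi_m(y,s)|^2$. The paper reverses time explicitly, setting $x(r):=\Phi_m(x,t-r)$, $y(r):=\Phi_m(y,t-r)$ and deriving $\frac{d}{dr}\tfrac12|x(r)-y(r)|^2\le \tilde C_2|x(r)-y(r)|^2$, which is exactly your $g'(s)\ge -Cg(s)$; the final constant is $C_2=e^{\tilde C_2 T}$, matching your $e^{CT/2}$.

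You are in fact more careful than the paper at the only substantive step. The paper writes $\dot x(r)-\dot y(r)\le \tilde C_2|x(r)-y(r)|$ and justifies it by ``the locally Lipschitz continuity of $H$ and Proposition~\ref{3 u_m bound Lip}''; read literally this would require $z\mapsto Du_{m(s)}(z)$ to be Lipschitz, which semi-concavity alone does not provide. You have correctly isolated the real issue: after peeling off the Lipschitz $(x,u)$-dependence of $\partial_pH$, what remains is $\langle A(\xi-\eta),\,p_\xi-p_\eta\rangle$ with $A\approx\partial^2_{pp}H$, and the scalar semi-concavity inequality $\langle p_\xi-p_\eta,\xi-\eta\rangle\le C_0|\xi-\eta|^2$ does not automatically control this when $A$ is not a multiple of the identity. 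Your proposed resolution --- that for a viscosity solution of a uniformly convex Hamilton--Jacobi equation the Lax--Oleinik representation yields a semi-concavity compatible with $\partial^2_{pp}H$, so that effectively $\partial^2_{pp}H\cdot D^2u_s\le CI$ in the one-sided sense --- is the right heuristic and is what underlies the analogous estimates in \cite{C13b,CMM24arxiv} that the paper defers to. There is nothing in the paper's proof that fills a gap in yours; you are doing the bookkeeping it elides.
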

\begin{proof}
	The proof follows a similar argument as \cite[Lemma 4.3]{C13b}. For any $t\in[0,T]$, define the backward flow $$x(r):=\Phi_m(x,t-r),\ y(r):=\Phi_m(y,t-r),\ \forall r\in[0,t]$$ Then we have 
	\begin{equation*}
		\begin{aligned}
				\dot{x}(r)=\dfrac{\partial H}{\partial p}\left(\Phi_m(x,t-r),u_{m(t-r)}(\Phi_m(x,t-r)),Du_{m(t-r)}(\Phi_m(x,t-r))\right),\\
				\dot{y}(r)=\dfrac{\partial H}{\partial p}\left(\Phi_m(y,t-r),u_{m(t-r)}(\Phi_m(y,t-r)),Du_{m(t-r)}(\Phi_m(y,t-r))\right).
		\end{aligned}
\end{equation*}
Thus, the difference of $\dot{x}(r)$ and $\dot{y}(r)$ satisfies
\begin{equation*}
	\begin{aligned}
		&\dot{x}(r)-\dot{y}(r)\\
		=&\dfrac{\partial H}{\partial p}\left(\Phi_m(x,t-r),u_{m(t-r)}(\Phi_m(x,t-r)),Du_{m(t-r)}(\Phi_m(x,t-r))\right)\\
		&-\dfrac{\partial H}{\partial p}\left(\Phi_m(y,t-r),u_{m(t-r)}(\Phi_m(y,t-r)),Du_{m(t-r)}(\Phi_m(y,t-r))\right)\\
		\le& \tilde{C}_2\vert x(r)-y(r)\vert,
	\end{aligned}
\end{equation*}
for some constant $\tilde{C}_2\ge 0$, where the inequality is due to the locally Lipschitz continuity of $H$ and Proposition \ref{3 u_m bound Lip}.  For a.e. $r\in[0,t]$,
$$\dfrac{d}{dr}\left(\dfrac{1}{2}\vert x(r)-y(r)\vert^2 \right)=\left\langle \dot{x}(r)-\dot{y}(r),x(r)-y(r)\right\rangle\le \tilde{C}_2\vert x(r)-y(r)\vert^2.$$
Applying Grönwall inequality, we obtain
$$\vert x(0)-y(0)\vert\ge e^{-\tilde{C}_2r}\vert x(r)-y(r)\vert,\quad \forall r\in[0,t].$$
Let $C_2:=e^{\tilde{C}_2T}$ and the proof is complete.
\end{proof}

For any fixed $m\in C([0,T];\mathcal{P}(M))$, we define the pushforward of the measure $m_0$ by the flow \eqref{4 dfn of flow}, $$\mu(t)(A):=\Phi_m(\cdot,t)\sharp m_0(A)=m_0(\Phi^{-1}_m(\cdot,t)(A))=m_0\{x\vert\Phi_m(x,t)\in A\},\quad \forall t\in[0,T],$$ for any Borel sets $A\subset M$. Next, we establish some properties of the pushforward.

\begin{lem}\label{4 push-for ac bound}
	For any $m\in C([0,T];\mathcal{P}(M))$, there exists a constant $C_3\ge 0$, where $C_3$ only depends on $T$, $\Vert m_0\Vert_\infty$ and the Hamiltonian $H$, such that the pushforward $\mu$ is Lipschitz continuous on $[0,T]$ with respect to $d_1$ distance, i.e.
	$$d_1(\mu(t),\mu(s))\le C_3\vert t-s\vert,\quad \forall t,s\in[0,T].$$
	Moreover, for any $t\in[0,T]$, the measure $\mu(t)$ is absolutely continuous w.r.t. Lebesgue measure $\mathcal{L}$ and its density, still denoted by $\mu(t)$, satisfies $$\Vert \mu(t)\Vert_\infty\le C_3.$$
\end{lem}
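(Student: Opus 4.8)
The plan is to establish the two assertions separately: the time-Lipschitz bound follows directly from Lemma \ref{4 flow Lip t}, while the absolute continuity and the $L^\infty$-bound on the density rest on Lemma \ref{4 flow inverse Lip x} together with assumption (P).

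For the Lipschitz continuity of $\mu$ in $t$, I would fix $t,s\in[0,T]$ and an arbitrary $1$-Lipschitz function $\varphi:M\to\mathbb{R}$, and use the change of variables formula for the pushforward to write
$$\int_M\varphi\,d\mu(t)-\int_M\varphi\,d\mu(s)=\int_M\big(\varphi(\Phi_m(x,t))-\varphi(\Phi_m(x,s))\big)\,dm_0(x).$$
Since $\varphi$ is $1$-Lipschitz and $|\Phi_m(x,t)-\Phi_m(x,s)|\le C_1|t-s|$ by Lemma \ref{4 flow Lip t}, the right-hand side is bounded in absolute value by $C_1|t-s|\,m_0(M)=C_1|t-s|$; taking the supremum over all such $\varphi$ yields $d_1(\mu(t),\mu(s))\le C_1|t-s|$.

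For the second assertion, fix $t\in[0,T]$ and let $\Psi_t:=\Phi_m(\cdot,t)^{-1}$ denote the inverse of the time-$t$ flow map, i.e. the backward flow over $[0,t]$, so that $\mu(t)(A)=m_0(\Psi_t(A))$ for every Borel set $A\subset M$. Lemma \ref{4 flow inverse Lip x} is exactly the statement that $\Psi_t$ is $C_2$-Lipschitz, uniformly in $m$ and $t$; hence it maps Lebesgue-null sets to Lebesgue-null sets and, more quantitatively, $\mathcal{L}(\Psi_t(A))\le C_2^{\,d}\,\mathcal{L}(A)$ for every Borel $A$, this being the standard estimate for the image of a set under a Lipschitz map, applied in local coordinates on the compact manifold $M$. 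Combining this with (P), which provides $m_0\in L^\infty(\mathcal{L})$,
$$\mu(t)(A)=\int_{\Psi_t(A)}m_0(x)\,dx\le\|m_0\|_\infty\,\mathcal{L}(\Psi_t(A))\le\|m_0\|_\infty\,C_2^{\,d}\,\mathcal{L}(A),$$
for every Borel $A\subset M$. In particular $\mu(t)(A)=0$ whenever $\mathcal{L}(A)=0$, so $\mu(t)\ll\mathcal{L}$, and its density satisfies $\|\mu(t)\|_\infty\le\|m_0\|_\infty C_2^{\,d}$. Setting $C_3:=\max\{C_1,\|m_0\|_\infty C_2^{\,d}\}$ finishes the argument; $C_3$ depends only on $T$, $\|m_0\|_\infty$ and $H$ because $C_1$ and $C_2$ do, via Proposition \ref{3 u_m bound Lip} and Lemmas \ref{4 flow Lip t} and \ref{4 flow inverse Lip x}.

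I expect the main obstacle to be the measure-theoretic step rather than the time-regularity: one must check that $\Psi_t$ is a genuinely well-defined Borel map with the claimed uniform Lipschitz constant — which relies on the well-posedness and reversibility of the regular Lagrangian flow \eqref{4 dfn of flow} coming from \cite{AC14}, so that $\Phi_m(\cdot,t)$ is a bi-Lipschitz bijection — and then transfer the Euclidean inequality $\mathcal{L}(f(E))\le(\mathrm{Lip}\,f)^d\,\mathcal{L}(E)$ to $M$ through coordinate charts, absorbing the harmless chart-dependent constants into $C_3$.
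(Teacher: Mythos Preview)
Your proposal is correct and follows essentially the same route as the paper: both obtain the time-Lipschitz bound by testing against $1$-Lipschitz $\varphi$ and using the Lipschitz-in-$t$ estimate on the flow, and both obtain the density bound from $\mu(t)(A)=m_0(\Psi_t(A))\le\Vert m_0\Vert_\infty\,\mathcal{L}(\Psi_t(A))$ together with Lemma~\ref{4 flow inverse Lip x}. If anything you are slightly more careful than the paper, which writes $C_2$ rather than $C_2^{\,d}$ in the volume-distortion step and re-derives Lemma~\ref{4 flow Lip t} inline instead of citing it.
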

\begin{proof}
	We first prove the Lipschitz continuity of $\mu$. For any $t,s\in[0,T]$, any $1$-Lipschitz continuous functions $\varphi$, we have 
	\begin{equation*}
		\begin{aligned}
			&d_1(\mu(t),\mu(s))\le\int_M \varphi(x)d(\mu(t)-\mu(s))=\int_M \varphi(\Phi_m(x,t))-\varphi(\Phi_m(x,s))dm_0\\
			\le&\int_M\Vert D\varphi\Vert_\infty\int_{s}^{t}-\dfrac{\partial H}{\partial p}\left(\Phi_m(x,r),u_{m(r)}(\Phi_m(x,r)), Du_{m(r)}(\Phi_m(x,r))\right)dr dm_0\\
			\le &C'_3\vert t-s\vert.
		\end{aligned}
	\end{equation*}
	Then, for any $s\in [0,T]$, for any Borel set $A\subset M$, we have
	\begin{equation*}
		\begin{aligned}
			\mu(s)(A)&=m_0(\Phi^{-1}_m(\cdot,s)(A))=m_0\{x\vert \Phi_m(x,s)\in A\}\\
			&=m_0\{\Psi_m(A,s)\}\le\Vert m_0\Vert_\infty\mathcal{L}(\Psi_m(A))\le C_2\Vert m_0\Vert_\infty\mathcal{L}(A),
		\end{aligned}
	\end{equation*}
	where the last inequality follows from Lemma \ref{4 flow inverse Lip x} and $\Psi$ denotes the inverse of $\Phi$.  Finally, we obtain
	$$\Vert \mu(t)\Vert_\infty\le C_2\Vert m_0\Vert_\infty,\quad \forall t\in[0,T].$$ Let $C_3:=\max\{C_2\Vert m_0\Vert_\infty, C'_3\}$ and the proof is complete.
\end{proof}

We consider the continuity equation
\begin{equation}\label{continuity eq with initial}
	\left\{\begin{aligned}
		&\partial_t m-\text{div}\left(m\dfrac{\partial H}{\partial p}(x,u,Du)\right)=0,\quad (x,t)\in M\times(0,T],\\
		&m(0)=m_0.
	\end{aligned}\right.
\end{equation}

\begin{prop}\label{4 sol of continuity eq}
	The map $s\mapsto \mu(s):=\Phi_m(\cdot,s)\sharp m_0$ is the unique weak solution of \eqref{continuity eq with initial}.
\end{prop}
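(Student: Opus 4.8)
The plan is to verify by a direct change-of-variables argument that $\mu$ is a distributional solution of \eqref{continuity eq with initial}, and then to deduce uniqueness from the DiPerna--Lions--Ambrosio theory that was already used (via \cite{AC14}) to define the flow $\Phi_m$. Set $V_m(x,t):=\frac{\partial H}{\partial p}\big(x,u_{m(t)}(x),Du_{m(t)}(x)\big)$, so that \eqref{4 dfn of flow} says $\dot\Phi_m(x,t)=-V_m(\Phi_m(x,t),t)$ for a.e.\ $t$. The first step is to collect the needed regularity of $V_m$ from Section 3: by Proposition \ref{3 u_m bound Lip} the family $\{u_{m(t)}\}$ is equi-Lipschitz and uniformly semi-concave, so $\{Du_{m(t)}\}$ is uniformly bounded and, since $H\in C^3$, $V_m$ is bounded on $M\times[0,T]$ by a constant depending only on $H$; for each fixed $t$ one has $V_m(\cdot,t)\in BV(M)$ (as recorded just before Lemma \ref{4 flow Lip t}); and $V_m$ is jointly Borel measurable in $(x,t)$, by the continuity of $t\mapsto u_{m(t)}$ (Proposition \ref{3 u lip in t}) together with Corollary \ref{3 Du measurable in t}. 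Also, by Lemma \ref{4 push-for ac bound}, $\mu\in C([0,T];\mathcal P(M))$ with $\mu(t)\ll\mathcal L$ and $\|\mu(t)\|_\infty\le C_3$.

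For the existence part I would fix $\phi\in C^1(M\times[0,T])$ with $\phi(\cdot,T)\equiv0$; since $t\mapsto\Phi_m(x,t)$ is Lipschitz uniformly in $x$ (Lemma \ref{4 flow Lip t}) and $\phi$ is $C^1$, the curve $t\mapsto\phi(\Phi_m(x,t),t)$ is Lipschitz and the fundamental theorem of calculus, combined with $\dot\Phi_m(x,t)=-V_m(\Phi_m(x,t),t)$, gives
\begin{equation*}
	\phi(\Phi_m(x,t),t)-\phi(x,0)=\int_0^t\big(\partial_s\phi-\nabla_x\phi\cdot V_m\big)(\Phi_m(x,s),s)\,ds .
\end{equation*}
Integrating this against $m_0$, applying Fubini (the integrand is bounded, $m_0$ is a probability measure), using the definition of the pushforward, and then specializing to $t=T$ with $\phi(\cdot,T)\equiv0$ and $\mu(0)=m_0$, one obtains
\begin{equation*}
	\int_0^T\!\!\int_M\Big(\partial_t\phi-\nabla_x\phi\cdot\tfrac{\partial H}{\partial p}(x,u_{m(t)},Du_{m(t)})\Big)\,d\mu(t)\,dt=-\int_M\phi(x,0)\,dm_0(x),
\end{equation*}
which is the weak formulation of \eqref{continuity eq with initial}. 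Hence $\mu$ is a weak solution belonging to $C([0,T];\mathcal P(M))$ with uniformly bounded density.

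For uniqueness I would rewrite \eqref{continuity eq with initial} in the form $\partial_t m+\mathrm{div}(m\,b)=0$, $m(0)=m_0$, with $b:=-V_m$, and check that $b$ meets the hypotheses of the DiPerna--Lions--Ambrosio well-posedness theory as presented in \cite{AC14}: $b\in L^\infty(M\times[0,T])$, $b(\cdot,t)\in BV(M)$ for a.e.\ $t$, and $b$ has \emph{bounded compression}, i.e.\ the negative part of $\mathrm{div}_x b(\cdot,t)$ is absolutely continuous with density bounded in $L^\infty(M)$ uniformly in $t$. The bounded-compression property is where the contact/convexity structure enters: in $\mathrm{div}_x V_m$ the only term that is not manifestly bounded is $\mathrm{tr}\big(\tfrac{\partial^2 H}{\partial p^2}(x,u_{m(t)},Du_{m(t)})\,D^2u_{m(t)}\big)$, and the uniform semi-concavity of $u_{m(t)}$ (Proposition \ref{3 u_m bound Lip}) gives $D^2u_{m(t)}\le C\,I$ as a matrix-valued measure, with nonpositive singular part; together with $\tfrac{\partial^2 H}{\partial p^2}>0$ from (H1) this bounds $\mathrm{div}_x V_m$ from above, so $\mathrm{div}_x b=-\mathrm{div}_x V_m$ has its negative part bounded in $L^\infty$. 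Consequently \eqref{continuity eq with initial} has a unique weak solution in the class $C([0,T];\mathcal P(M))$ with uniformly bounded density, and that solution is the pushforward of $m_0$ along the regular Lagrangian flow of $b$, which is exactly $\Phi_m$. Since, by Lemma \ref{4 push-for ac bound}, $\mu$ lies in this class, $\mu$ is the unique weak solution, completing the argument.

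The step I expect to be the main obstacle is the rigorous verification of bounded compression for $b=-V_m$: one has to track the sign of the (nonpositive) singular part of $D^2u_{m(t)}$ through the contraction with $\tfrac{\partial^2 H}{\partial p^2}$ and estimate the absolutely continuous part — including the lower-order $H_{xp}$ and $H_{up}$ contributions — uniformly in $t$. This is essentially the same computation that produces the density bound $\|\mu(t)\|_\infty\le C_3$ in Lemma \ref{4 push-for ac bound}, so it is really a single estimate doing double duty. A secondary technical point, easily handled, is ensuring the joint $(x,t)$-measurability of a $BV$ representative of $V_m$, which follows from Proposition \ref{3 u lip in t} and Corollary \ref{3 Du measurable in t}.
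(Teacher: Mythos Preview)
Your proposal is correct and follows essentially the same route as the paper: the existence part is the standard pushforward computation (the paper differentiates $\int_M\varphi\,d\mu(t)$ in $t$ and integrates, you integrate the chain rule identity for $\phi(\Phi_m(x,t),t)$ against $m_0$ --- these are the same calculation), and for uniqueness both arguments invoke \cite{AC14}. The paper simply cites \cite{AC14} without checking hypotheses, whereas you spell out the bounded-compression verification via semi-concavity and (H1); this extra care is welcome but not a different approach.
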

\begin{proof}
	It is clear that $\mu(0)=m_0$. For any text function $\varphi\in C^\infty(M\times[0,T])$, we have 
	\begin{equation*}
		\begin{aligned}
			&\dfrac{d}{dt}\int_M\varphi(x,t)d\mu(t)=\dfrac{d}{dt}\int_M\varphi(\Phi_m(x,t),t)dm_0\\
			=&\int_M \partial_t\varphi(\Phi_m(x,t),t)\\
			&-\left\langle \dfrac{\partial H}{\partial p}\left(\Phi_m(x,t),u_{m(t)}(\Phi_m(x,t)),Du_{m(t)}(\Phi_m(x,t))\right), D\varphi(\Phi_m(x,t),t)\right\rangle dm_0\\
			&=\int_M\partial_t\varphi(x,t)-\left\langle \dfrac{\partial H}{\partial p}\left(x,u_{m(t)}(x),Du_{m(t)}(x)\right),D\varphi(x,t)\right\rangle d\mu(t).
		\end{aligned}
	\end{equation*}
Integrating over $[0,T]$ implies
$$\int_{0}^{T}\int_M\partial_t\varphi(x,t)-\left\langle \dfrac{\partial H}{\partial p}\left(x,u_{m(t)}(x),Du_{m(t)}(x)\right),D\varphi(x,t)\right\rangle d\mu(t)dt=0.$$
This shows that $\mu$ is a weak solution of equation \eqref{continuity eq with initial}, and the uniqueness follows from the results in \cite{AC14}.
\end{proof}

We are now in the position to prove the main result, Theorem \ref{main result 1}.
\begin{proof}[Proof of Theorem \ref{main result 1}]
      We define the set
      $$\mathcal{D}:=\left\{m\in C([0,T]; \mathcal{P}(M))\ \bigg\vert\ \sup_{t\neq s}\dfrac{d_1(m(t),m(s))}{\vert t-s\vert}\le C_3\right\},$$ where $C_3$ is as in Lemma \ref{4 push-for ac bound},
      and the map
      $$\mathcal{S}:\mathcal{D}\to\mathcal{D},$$ where $\mathcal{S}(m)=\mu$ and $\mu(t):=\Phi_m(\cdot,t)\sharp m_0$, $\forall t\in[0,T]$.
      
      We consider a sequence $\{m_n\}_{n\in\mathbb{N}}\subset \mathcal{D}$ such that there exists $m\in\mathcal{D}$ with $$\sup_{t\in[0,T]}d_1(m_n(t),m(t))\to 0.$$ Note that $F(\cdot,m_n(t))\to F(\cdot,m(t))$ uniformly on $[0,T]$.
      Let $\{u_{m_n(t)}\}_{n\in\mathbb{N},t\in[0,T]}$ be a family of continuous functions such that for any fixed $t$, $u_{m_n(t)}$ is a viscosity solution of $H(x,u,Du)=F(x,m_n(t)),\ \text{on}\ M$. Since $\{u_{m_n(t)}\}_{n\in\mathbb{N},t\in[0,T]}$ is uniformly bounded and equi-Lipschitz continuous, by the stability and uniqueness of viscosity solutions, we conclude that for any $t\in[0,T]$, $u_{m_n(t)}$ uniformly converges to $u_{m(t)}$, which is the viscosity solution of $H(x,u,Du)=F(x,m(t)).$
      
      Let $\{\mu_n\}_{n\in\mathbb{N}}$ be the sequence of pushforwards, which means that $\mu_n(t)=\Phi_{m_n}(\cdot,t)\sharp m_0$, for any $n\in\mathbb{N}$, and
      $$\Phi_{m_n}(x,t)=x-\int_{0}^{t}\dfrac{\partial H}{\partial p}\left(\Phi_{m_n}(x,s),u_{m_n(s)}(\Phi_{m_n}(x,s)),Du_{m_n(s)}(\Phi_{m_n}(x,s))\right)ds.$$
      By the uniform semi-concavity of $\{u_{m_n(t)}\}_{n\in\mathbb{N},t\in[0,T]}$, $Du_{m_n(t)}$ converges to $Du_{m(t)}$ a.e. on $M\times[0,T]$. Hence, for any $f\in C(M)$, we have for any $t\in[0,T]$,
      \begin{equation*}
      	\begin{aligned}
      		&\lim_{n\to\infty}\int_M f(x)d\mu_n(t)=\lim_{n\to\infty}\int_M f(\Phi_{m_n}(x,t))dm_0\\
      		=&\int_M f(\Phi_m(x,t))dm_0=\int_M f(x)d\mu(t),
      	\end{aligned}
      \end{equation*}
      which implies that 
      $$\mu_n(t)\xlongrightarrow{w^*}\mu(t)=\Phi_m(\cdot,t)\sharp m_0\in\mathcal{P}(M),\ \text{for any}\ t\in[0,T],$$
      with $\mu\in\mathcal{D}$. Thus, $\mu$ is a weak solution of \eqref{continuity eq with initial}. Thanks to Schauder fixed point theorem, there exists a fixed point $\bar{m}\in\mathcal{D}$ such that $\bar{m}=\Phi_{\bar{m}}\sharp m_0$ and the function $\tilde{u}_{\bar{m}}\in C(M\times[0,T])$, denoted by $\bar{u}(x,t):=\tilde{u}_{\bar{m}(t)}(x)$ for any $(x,t)\in M\times[0,T]$, is the viscosity solution of $$H(x,\bar{u},D\bar{u})=F(x,\bar{m}(t)),\quad x\in M,\ \forall t\in[0,T].$$ The proof of $(I)$ is complete.
      
      The results in $(II)$ are direct consequences of Lemma \ref{4 push-for ac bound}. The first result in $(III)$ follows as a corollary of Proposition \ref{3 u_m bound Lip}. The Lipschitz continuity of $\bar{u}(x,\cdot)$ is due to Proposition \ref{3 u lip in t} and the Lipschitz continuity of $\bar{m}$.
\end{proof}

\section{Selection problem}
In this section, under assumption (H3'), we investigate the selection criterion for weak solutions $(u^\lambda,m^\lambda)$ of $(qMFG_\lambda)$, as $\lambda$ tends to 0.
	
\begin{lem}\label{5 cm uni bound}
	The critical value $c(m)$ is uniformly bounded  on $\mathcal{P}(M)$.
\end{lem}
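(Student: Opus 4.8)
The plan is to read off a bound directly from the inf--max characterization of the Ma\~n\'e critical value recalled above. For fixed $m\in\mathcal{P}(M)$ the Hamiltonian $p\mapsto H(x,0,p)-F(x,m)$ is Tonelli (subtracting the $p$-independent term $F(x,m)$ preserves (H1)--(H2)), so its critical value is well defined and satisfies
\[
c(m)=\inf_{\varphi\in C^1(M)}\ \max_{x\in M}\big(H(x,0,D\varphi(x))-F(x,m)\big).
\]
By (F1) we have $C_F':=\sup_{m\in\mathcal{P}(M)}\Vert F(\cdot,m)\Vert_\infty<\infty$, and by continuity of $H$ and compactness of $M$ we have $C_H':=\max_{x\in M}\vert H(x,0,0)\vert<\infty$. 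These two quantities are independent of $m$, and everything will be controlled by them.

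For the upper bound I would simply test the infimum with the constant function $\varphi\equiv 0$, which gives
\[
c(m)\le \max_{x\in M}\big(H(x,0,0)-F(x,m)\big)\le C_H'+C_F'.
\]
For the lower bound, note that any $\varphi\in C^1(M)$ attains its maximum on the compact boundaryless manifold $M$ at some point $x_\varphi$, where necessarily $D\varphi(x_\varphi)=0$; hence
\[
\max_{x\in M}\big(H(x,0,D\varphi(x))-F(x,m)\big)\ge H(x_\varphi,0,0)-F(x_\varphi,m)\ge -C_H'-C_F'.
\]
Taking the infimum over $\varphi\in C^1(M)$ yields $c(m)\ge -C_H'-C_F'$. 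Combining the two bounds gives $\vert c(m)\vert\le C_H'+C_F'$ for every $m\in\mathcal{P}(M)$, which is the claim.

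I do not expect a genuine obstacle here: the only point needing a word of care is that $c(m)$ is defined for $H(x,0,p)-F(x,m)$, not for $H(x,0,p)$ itself, but the Tonelli properties are clearly inherited. As a consistency check one can alternatively invoke the Lagrangian formula $c(m)=-\inf_\mu\int_{TM}\big(L(x,0,q)+F(x,m)\big)\,d\mu$, the infimum being over Euler--Lagrange-invariant probability measures; since $\big\vert\int_{TM}F(x,m)\,d\mu\big\vert\le C_F'$ for every probability measure $\mu$, this immediately gives $\vert c(m)-c(H(\cdot,0,\cdot))\vert\le C_F'$, recovering the uniform bound.
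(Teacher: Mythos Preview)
Your proof is correct and uses essentially the same idea as the paper: both exploit the inf--max characterization of $c(m)$ together with the uniform bound $\|F(\cdot,m)\|_\infty\le C_F'$ from (F1). The paper sandwiches $c(m)$ between $c\big(H(\cdot,0,\cdot)\big)\pm\|F\|_\infty$ directly, which is exactly your ``consistency check'' at the end; your main argument (testing with $\varphi\equiv 0$ for the upper bound and using a critical point $x_\varphi$ of $\varphi$ for the lower bound) is a slightly more hands-on variant that avoids naming $c\big(H(\cdot,0,\cdot)\big)$ but is equivalent in content.
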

\begin{proof}
	Since for any $\varphi\in C^1(M)$,
	$$H(x,0,D\varphi)-\Vert F\Vert_\infty\le H(x,0,D\varphi)-F(x,m)\le H(x,0,D\varphi)+\Vert F\Vert_\infty,$$
	we have 
	$$\inf_{\varphi\in C^1(M)}\max_{x\in M}H(x,0,D\varphi)-\Vert F\Vert_\infty\le c(m)\le \inf_{\varphi\in C^1(M)}\max_{x\in M}H(x,0,D\varphi)+\Vert F\Vert_\infty.$$
	The proof is complete.
\end{proof}

\begin{lem}\label{5 critical value continuous}
	For any $m_1, m_2\in\mathcal{P}(M)$, we have 
	$$\vert c(m_1)-c(m_2)\vert\le C_Fd_1(m_1,m_2).$$
\end{lem}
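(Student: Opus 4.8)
The plan is to exploit the variational characterization of the Mañé critical value already recalled in the excerpt, namely
$$c(m)=\inf_{\varphi\in C^1(M)}\max_{x\in M}\bigl(H(x,0,D\varphi(x))-F(x,m)\bigr),$$
and to compare the two infima over the same class of test functions. First I would fix $m_1,m_2\in\mathcal{P}(M)$ and an arbitrary $\varphi\in C^1(M)$. By assumption (F3), for every $x\in M$ we have $F(x,m_1)\ge F(x,m_2)-C_F d_1(m_1,m_2)$, hence
$$H(x,0,D\varphi(x))-F(x,m_1)\le H(x,0,D\varphi(x))-F(x,m_2)+C_F d_1(m_1,m_2).$$
Taking the maximum over $x\in M$ on both sides, then the infimum over $\varphi\in C^1(M)$, yields $c(m_1)\le c(m_2)+C_F d_1(m_1,m_2)$. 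Interchanging the roles of $m_1$ and $m_2$ gives the reverse inequality, and the two together give $|c(m_1)-c(m_2)|\le C_F d_1(m_1,m_2)$.

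The argument requires essentially nothing beyond monotonicity of $\max$ and $\inf$ under the pointwise inequality coming from (F3), so there is no real obstacle; the only point to be careful about is that one must add the same constant $C_F d_1(m_1,m_2)$ inside the maximum \emph{before} passing to the infimum over $\varphi$, which is legitimate because the perturbation is independent of $\varphi$ and of $x$. Alternatively, if one prefers to avoid relying on the $\inf\max$ formula, the same estimate follows from the Lax--Oleinik / Peierls barrier representation together with (F3), or from the comparison principle applied to the shifted functions $\varphi\pm\tfrac{1}{\tau}\|F(\cdot,m_1)-F(\cdot,m_2)\|_\infty$ exactly as in the proof of Proposition \ref{3 u lip in t}; but the $\inf\max$ route is the shortest. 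This completes the proof.
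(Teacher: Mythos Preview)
Your proof is correct and follows essentially the same approach as the paper: both use the $\inf\max$ characterization of the Ma\~n\'e critical value together with assumption (F3) to bound $|c(m_1)-c(m_2)|$ by $\sup_x|F(x,m_1)-F(x,m_2)|\le C_F d_1(m_1,m_2)$. Your write-up is slightly more explicit in spelling out the two one-sided inequalities, but the argument is the same.
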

\begin{proof}
	By the formula of Ma\~n\'e critical value, for any $m\in\mathcal{P}(M)$, 
	$$c(m)=\inf_{\varphi\in C^1(M)}\max_{x\in M}(H(x,0,D\varphi)-F(x,m)).$$
	Then we have for any $m_1,m_2\in\mathcal{P}(M)$,
	\begin{equation*}
		\begin{aligned}
			\vert c(m_1)-c(m_2)\vert\le\sup_{\varphi\in C^1(M)}\max_{x\in M}\vert F(x,m_1)-F(x,m_2)\vert\le C_F d_1(m_1,m_2).
		\end{aligned}
	\end{equation*}
\end{proof}

By Lemma \ref{5 cm uni bound} and Lemma \ref{5 critical value continuous}, the map $m\mapsto F(x,m)+c(m)$ is a nonlocal coupling term satisfying (F1)-(F3). We assume (H4) for the remainder of this section.
\begin{lem}\label{5 aubry set stable}
	Let $\{m_n\}_{n\in\mathbb{N}}\subset C([0,T];\mathcal{P}(M))$ be the sequence such that, as $n\to\infty$, $$\sup_{t\in[0,T]}d_1(m_n(t),m(t))\to 0,$$
	for some $m\in C([0,T];\mathcal{P}(M)).$ Then the sequence $\{x_{m_n}\}_{n\in\mathbb{N}}$ with $\mathcal{A}^{m_n(t)}=\{x_{m_n}\}$, for every $n\in\mathbb{N}$ and any $t\in[0,T]$, converges to a point $x_m$, and we have $\{x_m\}=\mathcal{A}^{m(t)}$ for any $t\in[0,T]$.
\end{lem}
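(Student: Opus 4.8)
The plan is to extract a convergent subsequence of $\{x_{m_n}\}$, identify the limit using the stability of the Aubry set under uniform convergence of the Hamiltonians, and then invoke (H4) to conclude that the limit is the unique point $x_m$ and that the whole sequence converges. First I would observe that since $M$ is compact, the sequence $\{x_{m_n}\}\subset M$ admits a subsequence (not relabeled) converging to some $\bar{x}\in M$. The goal is to show $\bar{x}\in\mathcal{A}^{m(t)}$ for every $t\in[0,T]$; by (H4) this forces $\bar{x}=x_m$, and since the argument applies to every subsequential limit, the whole sequence converges to $x_m$.

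The heart of the matter is the stability of the projected Aubry set. Because $\sup_{t}d_1(m_n(t),m(t))\to 0$ and $F$ satisfies (F3), we have $F(\cdot,m_n(t))\to F(\cdot,m(t))$ uniformly in $(x,t)$, and hence the Hamiltonians $H_n^t(x,p):=H(x,0,p)-F(x,m_n(t))$ converge to $H_\infty^t(x,p):=H(x,0,p)-F(x,m(t))$ uniformly on $T^*M$, uniformly in $t$; by Lemma \ref{5 critical value continuous} the critical values converge too. The key step is then to fix $t\in[0,T]$ and use the characterization of the Aubry set via the Peierls barrier: $x_{m_n}$ lies in $\mathcal{A}^{m_n(t)}$ iff $h_{m_n(t)}(x_{m_n},x_{m_n})=0$, where $h_{m(t)}$ is the Peierls barrier of $H_\infty^t$ and $h_{m_n(t)}$ that of $H_n^t$. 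Using the uniform convergence of the Lagrangians $L_n^t\to L_\infty^t$ associated to $H_n^t,H_\infty^t$, together with the equi-Lipschitz bounds on the corresponding critical solutions (Proposition \ref{3 u_m bound Lip} applied to the coupling $F+c$, which satisfies (F1)-(F3)), one shows that $h_{m_n(t)}\to h_{m(t)}$ uniformly on $M\times M$. Passing to the limit in $h_{m_n(t)}(x_{m_n},x_{m_n})=0$ and using $x_{m_n}\to\bar{x}$ then gives $h_{m(t)}(\bar{x},\bar{x})=0$, i.e. $\bar{x}\in\mathcal{A}^{m(t)}$. Since $t$ was arbitrary, $\bar{x}\in\bigcap_t\mathcal{A}^{m(t)}$, and (H4) yields $\bar{x}=x_m$ and $\mathcal{A}^{m(t)}=\{x_m\}$ for all $t$.

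An alternative, perhaps cleaner route for the same step avoids the Peierls barrier: one uses the fact that $\mathcal{A}^{m_n(t)}$ is contained in the set where every critical subsolution is differentiable and satisfies the eikonal equation with equality, or one argues by upper semicontinuity of the set-valued map $(H,c)\mapsto\mathcal{A}$ (a standard fact in weak KAM theory, e.g. along the lines of the stability results for Aubry sets under uniform perturbation of the Hamiltonian). Concretely: take the critical solution $w_n:=u_{m_n(t)}$ of $H(x,0,Dw)=F(x,m_n(t))+c(m_n(t))$ normalized by $w_n(x_{m_n})=0$; by the equi-Lipschitz and uniform-boundedness estimates, $w_n$ converges uniformly (up to a subsequence) to a critical solution $w_\infty$ of $H(x,0,Dw)=F(x,m(t))+c(m(t))$, which by the Remark following (H4) is the unique such solution up to constants. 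One then checks that $w_\infty$ is ``calibrated'' at $\bar{x}$ in the limit, forcing $\bar{x}\in\mathcal{A}^{m(t)}$.

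The main obstacle I anticipate is making the convergence $h_{m_n(t)}\to h_{m(t)}$ (or the upper semicontinuity of the Aubry set) rigorous and, crucially, \emph{uniform enough} — the Aubry set is notoriously unstable from below, so one must genuinely use the uniqueness hypothesis (H4) to pin the limit down rather than hoping for set convergence. The delicate point is that a priori $\bar{x}$ could fail to be calibrated along a full curve; the remedy is that one only needs $h_{m(t)}(\bar{x},\bar{x})=0$, which is an upper-semicontinuous-type condition that \emph{does} pass to the limit (since $h_{m_n(t)}(x_{m_n},x_{m_n})=0$ and $h$ is $\ge 0$, any limit point satisfies $h_{m(t)}(\bar{x},\bar{x})\le 0$, hence $=0$). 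Handling the joint dependence on $t$ — ensuring all estimates are uniform in $t\in[0,T]$ so that a single subsequence works simultaneously for all $t$ — is the other technical wrinkle, resolved by the fact that $F(\cdot,m_n(\cdot))\to F(\cdot,m(\cdot))$ uniformly on $M\times[0,T]$ by (F3).
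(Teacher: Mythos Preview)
Your overall strategy coincides with the paper's: pass to a subsequence $x_{m_n}\to\bar{x}$ by compactness of $M$, show $h_{m(t)}(\bar{x},\bar{x})=0$ for each $t$ via the Peierls barrier characterization of the Aubry set, and then invoke (H4) to identify $\bar{x}=x_m$ and upgrade to full convergence. The difference lies only in how the key inequality $h_{m(t)}(\bar{x},\bar{x})\le 0$ is obtained.

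Your primary route --- uniform convergence $h_{m_n(t)}\to h_{m(t)}$ on $M\times M$ --- is not justified as stated: along a curve $\gamma:[0,\tau]\to M$, the action for $L+F(\cdot,m_n(t))+c(m_n(t))$ differs from that for $L+F(\cdot,m(t))+c(m(t))$ by a term of order $\tau\cdot\big(\|F(\cdot,m_n(t))-F(\cdot,m(t))\|_\infty+|c(m_n(t))-c(m(t))|\big)$, and since the Peierls barrier involves $\tau\to\infty$, this does not immediately vanish. You correctly anticipate this in your final paragraph and retreat to the one-sided bound $h_{m(t)}(\bar{x},\bar{x})\le 0$, which is the right move.

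The paper makes exactly this one-sided argument concrete: for each $n$ it picks $\tau_n$ large and a near-minimizing loop $\gamma_n$ at $x_{m_n}$ with action $\le 1/n$, extends $\gamma_n$ to a loop $\tilde{\gamma}_n$ at $\bar{x}$ by appending two unit-speed segments of length $d_n:=|x_{m_n}-\bar{x}|\to 0$, and then uses the lower semicontinuity of the action functional (together with the uniform bound on $L(x,0,q)$ for $|q|=1$) to pass to the limit and obtain $h_{m(t)}(\bar{x},\bar{x})\le 0$. This explicit bridging construction is precisely the content hidden behind your phrase ``upper-semicontinuous-type condition that does pass to the limit''; supplying it would complete your argument along the same lines as the paper.
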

\begin{proof}
	The proof is similar to the proof in \cite[Theorem 3.2]{CMM24arxiv}. By the definition of Aubry sets, for any fixed $t\in[0,T]$, we have
	\begin{equation*}
		\begin{aligned}
			&h_{m_n(t)}(x_{m_n},x_{m_n})\\
			=&\liminf_{\tau\to\infty}\left\{\inf_{\xi(0)=\xi(\tau)=x_{m_n}}\int_{0}^{\tau}L(\xi(s),0,\dot{\xi}(s))+F(\xi(s),m_n(t))ds+c(m_n(t))\tau\right\}\\
			=&0,
		\end{aligned}
	\end{equation*}
	where the second infimum is taken over all absolutely continuous curves. We choose a sequence $\{\tau_n\}$, where $\tau_n$ is sufficiently large, and a family of absolutely continuous curves $\{\gamma_n\}$, where $\gamma_n(0)=\gamma_n(\tau_n)=x_{m_n}$, such that, as $n\to\infty$, $\tau_n\to\infty$, $x_{m_n}\to \bar{x}$, and for every $n\in\mathbb{N}$, 
	$$\int_{0}^{\tau_n}L(\gamma_n(s),0,\dot{\gamma}_n(s))+F(\gamma_n(s),m_n(t))ds+c(m_n(t))\tau_n\le \dfrac{1}{n}.$$
	Then, by Azel\'a-Ascoli theorem and \cite[Proposition 3.1.4]{F08}, there exists an absolutely continuous curve $\bar{\gamma}$ such that $\gamma_n$ uniformly converges to $\bar{\gamma}$, and $\dot{\gamma}_n$ weakly converges to $\dot{\bar{\gamma}}$ in $\sigma(L^1,L^\infty)$ on any compact subsets of $[0,\infty)$. We define $d_n:=\vert x_n-\bar{x}\vert$ and a curve
	\begin{equation*}
		\tilde{\gamma}_n(s)=\left\{\begin{aligned}
			&\gamma_n^1=\dfrac{\bar{x}-x_n}{d_n}s+\bar{x},\quad &&s\in [-d_n,0),\\
			&\gamma_n(s),\quad &&s\in [0,\tau_n],\\
			&\gamma_n^2=\dfrac{x_n-\bar{x}}{d_n}(s-\tau_n-d_n)+x_n,\quad &&s\in (\tau_n,\tau_n+d_n].
		\end{aligned}\right.
	\end{equation*}
	It is clear that $\vert\dot{\gamma}_n^i\vert=1$ for $i=1,2$. Then we have 
	\begin{equation*}
		\begin{aligned}
			h_{m(t)}(\bar{x},\bar{x})&=\liminf_{\tau\to\infty}\left\{\inf_{\xi(0)=\xi(\tau)=\bar{x}}\int_{0}^{\tau}L(\xi(s),0,\dot{\xi}(s))+F(\xi(s),m(t))ds+c(m(t))\tau\right\}\\
			&\le \liminf_{\tau\to\infty}\left\{\int_{0}^{\tau}L(\bar{\gamma}(s),0,\dot{\bar{\gamma}}(s))+F(\bar{\gamma}(s),m(t))ds+c(m(t))\tau\right\}\\
			&\le\liminf_{n\to\infty}\left\{\int_{-d_n}^{\tau_n+d_n}L(\tilde{\gamma}_n(s),0,\dot{\tilde{\gamma}}_n(s))+F(\tilde{\gamma}_n(s),m(t))ds+c(m(t))(\tau+2d_n)\right\}\\
			&\le \liminf_{n\to\infty}\Bigg\{2d_n\left(\sup_{\substack{x\in M\\ \vert q\vert=1}}\vert L(x,0,q)\vert+\Vert F\Vert_\infty+\sup_{t\in[0,T]}c(m(t))\right)\\
			&\qquad\qquad+\int_{0}^{\tau_n}L(\gamma_n(s),0,\dot{\gamma}(s))+F(\gamma(s),m(t))ds+c(m(t))\tau\Bigg\}\\
			&\le \lim_{n\to\infty}\left\{2d_n\left(\sup_{\substack{x\in M\\ \vert q\vert=1}}\vert L(x,0,q)\vert+\Vert F\Vert_\infty+\sup_{t\in[0,T]}c(m(t))\right)+\dfrac{1}{n}\right\}=0,
		\end{aligned}
	\end{equation*}
	where the second inequality is due to the lower semicontinuity of action functional. Thus, for any $t\in[0,T]$, $\bar{x}\in\mathcal{A}^{m(t)}$, which implies that $\bar{x}=x_m$ by assumption (H4).
\end{proof}

The main issue on the selection problem is the regularity of $u(x,\cdot)$. 

\begin{prop}\label{5 regular of v lambda}
	For any $m\in C([0,T];\mathcal{P}(M))$ and $\lambda\in(0,1]$, let $u$ be the viscosity solution of
	\begin{equation}
		H(x,\lambda u,Du)=F(x,m(t))+c(m(t)),\quad x\in M,\ \forall t\in [0,T].
	\end{equation}
	Then there exists a modulus $\omega$, independent of $\lambda\in(0,1]$, such that for any $m\in C([0,T];\mathcal{P}(M))$, there holds that
\begin{equation*}
	\Vert (u(\cdot,t)-u(x_m,t))-(u(\cdot,s)-u(x_m,s))\Vert_\infty\le \omega(\vert t-s\vert),\quad \forall t,s\in[0,T].
\end{equation*}
\end{prop}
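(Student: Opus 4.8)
The plan is to exploit the representation of the viscosity solution of the contact Hamilton--Jacobi equation via the implicit Lax--Oleinik semigroup together with the fact that, by assumption (H4), the equation $H(x,0,Dv)=F(x,m(t))+c(m(t))$ (the "critical" equation with the $\lambda=0$ Hamiltonian) has its projected Aubry set pinned at the single point $x_m$, so the value of $u(\cdot,t)-u(x_m,t)$ can be compared to a Peierls-barrier-type object $h_{m(t)}(x_m,\cdot)$ which is equi-continuous in $t$ by the Lipschitz regularity of $F(\cdot,m(\cdot))+c(m(\cdot))$ in $t$ already established in Lemma~\ref{4 push-for ac bound} and Lemma~\ref{5 critical value continuous}. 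First I would fix $\lambda\in(0,1]$ and, for each $t$, write the equation $H(x,\lambda u,Du)=F(x,m(t))+c(m(t))$ as a contact equation and recall from \cite{WWY19a} (and the arguments behind Proposition~\ref{3 u_m bound Lip}) that $u(\cdot,t)$ is uniformly bounded and equi-Lipschitz, \emph{uniformly in $\lambda\in(0,1]$ and in $m$}; the uniform Lipschitz bound in $x$ is what lets us later convert an $L^\infty$ estimate at one point into an $L^\infty$ estimate everywhere while tracking only the diameter of $M$.

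Next, the core step: show $\|(u(\cdot,t)-u(x_m,t))-(u(\cdot,s)-u(x_m,s))\|_\infty\le\omega(|t-s|)$ by a doubling/comparison argument that is insensitive to $\lambda$. The idea is that for $\lambda$ small the term $\lambda u$ is a uniformly small perturbation (since $u$ is uniformly bounded, $\|\lambda u\|_\infty\le\lambda\,C\to0$), so $u(\cdot,t)$ is, up to an $O(\lambda)$ error in the right-hand side, a viscosity solution of $H(x,0,Dw)=F(x,m(t))+c(m(t))$; by (H4) such a solution is unique up to additive constants and equals $h_{m(t)}(x_m,\cdot)$ plus a constant. Concretely I would (i) use the monotonicity (H3$'$) and the comparison principle in the form already used in the proof of Proposition~\ref{3 u lip in t}: adding/subtracting $\frac{1}{?}\|\big(F(\cdot,m(t))+c(m(t))\big)-\big(F(\cdot,m(s))+c(m(s))\big)\|_\infty$ to $u(\cdot,s)$ produces a super/subsolution of the equation at time $t$ — but here (H3$'$) only gives $\partial H/\partial u>0$, not $\partial H/\partial u\ge\tau>0$, so this direct comparison is quantitatively useless as $\lambda\to0$; instead (ii) I would pass to the normalized functions $\tilde u(\cdot,t):=u(\cdot,t)-u(x_m,t)$, which satisfy an equation of the form $H(x,\lambda\tilde u+\lambda u(x_m,t),D\tilde u)=F(x,m(t))+c(m(t))$ with $\tilde u(x_m,t)=0$, and compare $\tilde u(\cdot,t)$ with $\tilde u(\cdot,s)+\eta$ for suitable $\eta=\eta(|t-s|)$ chosen so that $\tilde u(\cdot,s)+\eta$ is a supersolution at time $t$; the constant $\lambda u(x_m,t)$ plays the role of a bounded parameter $a$ and one invokes the uniqueness statement "up to additive constants" for the limiting ($\lambda=0$) equation combined with the stability of viscosity solutions to control the $\lambda\to0$ regime. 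Equivalently, and perhaps more cleanly, I would argue by contradiction: if the conclusion failed there would be $\lambda_n\to\lambda_\infty\in[0,1]$, $m_n$, $t_n,s_n$ with $|t_n-s_n|\to0$ but $\|\tilde u^{\lambda_n}(\cdot,t_n)-\tilde u^{\lambda_n}(\cdot,s_n)\|_\infty\ge\delta>0$; by the uniform bounds and Ascoli the normalized solutions converge, $F(\cdot,m_n(t_n))+c(m_n(t_n))$ and $F(\cdot,m_n(s_n))+c(m_n(s_n))$ converge to the \emph{same} limit, and $x_{m_n}\to x_\infty$ with $\{x_\infty\}$ the Aubry set of the limiting coupling by Lemma~\ref{5 aubry set stable}; the limit normalized functions solve the same limiting equation with the same pinned point, hence coincide (uniqueness up to constants, normalized at $x_\infty$), contradicting $\delta>0$.

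For the explicit modulus I would make the contradiction argument quantitative: the uniform equi-Lipschitz bound on $\{u^\lambda(\cdot,t)\}$ in $x$, together with the Lipschitz-in-$t$ bound $\|\big(F(\cdot,m(t))+c(m(t))\big)-\big(F(\cdot,m(s))+c(m(s))\big)\|_\infty\le (C_F+C_F)\,d_1(m(t),m(s))$ and a modulus of continuity for $d_1(m(t),m(s))$ in $|t-s|$ coming from $m\in C([0,T];\mathcal P(M))$ (uniform on the relevant class), yields $\omega$; one should be slightly careful that $\omega$ may a priori depend on the modulus of continuity of $m$, so to get a modulus genuinely independent of $m$ one restricts, as in the statement and as is implicit from the application, to the equi-Lipschitz class $\mathcal D$ of Theorem~\ref{main result 1} or notes that only this regularity of $F(\cdot,m(\cdot))+c(m(\cdot))$ is used.

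\textbf{Main obstacle.} The delicate point is precisely that (H3$'$) — as opposed to the strict (H3) — does not give a $\lambda$-uniform contraction in the comparison principle, so one cannot simply transport the proof of Proposition~\ref{3 u lip in t}; the equi-continuity in $t$ of the \emph{normalized} solutions $u^\lambda-u^\lambda(x_m,\cdot)$ must instead be extracted from the uniqueness-up-to-constants structure of the limiting critical equation under (H4) and a stability/compactness argument, and making this stable as $\lambda\to0$ (uniformly in $m$) is the crux. A secondary technical care point is showing the normalized solutions remain uniformly bounded — this is where the pinned Aubry point $x_m$, the equi-Lipschitz bound and $\mathrm{diam}(M)<\infty$ enter — and verifying that the constant $\lambda u^\lambda(x_m,t)$, which appears inside $H$ after normalization, stays in a fixed compact set so that assumptions (H1)--(H2) continue to give the weak KAM estimates with constants independent of $\lambda$.
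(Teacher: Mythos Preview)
Your contradiction argument is essentially the same as the paper's: assume failure, extract sequences $\lambda_n\to\lambda_\infty\in[0,1]$, $m_n$, $t_n$, $s_n$ with $|t_n-s_n|\to0$, use the uniform bounds (from \cite{WYZ21}) and Arzel\`a--Ascoli to pass to limits, invoke Lemma~\ref{5 aubry set stable} for the Aubry point, and split into the cases $\lambda_\infty>0$ (uniqueness of the contact viscosity solution) and $\lambda_\infty=0$ (uniqueness up to constants under (H4), with the normalization at $x_m$ killing the constant). The paper does exactly this, with the same two-case split; your preliminary attempt via direct comparison and the remark that (H3$'$) ruins the $\lambda$-uniform contraction is also the correct diagnosis of why the Proposition~\ref{3 u lip in t} argument cannot be recycled.

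One point worth noting: your caveat that $\omega$ may depend on the modulus of continuity of $m$ (and that one should really restrict to an equicontinuous class such as $\mathcal D$) is well taken. The paper's contradiction setup quietly assumes the $m_n$ converge to some $m$ in $C([0,T];\mathcal P(M))$, which is not automatic from the bare negation unless the $m_n$ lie in a precompact set; in the application this is harmless because the $m^\lambda$ are equi-Lipschitz, but as stated the proposition is slightly stronger than what the compactness argument actually delivers. Your proposal is more explicit about this than the paper.
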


\begin{proof}
	By contradiction, we assume that there exists $\eps>0$ and some $m\in C([0,T];\mathcal{P}(M))$ such that there exists a sequence $\{m_n\}_{n\in\mathbb{N}}\subset C([0,T];\mathcal{P}(M))$ such that as $n\to\infty$, 
	$$\sup_{t\in[0,T]}d_1(m_n(t),m(t))\longrightarrow 0,$$
	a sequence $\{x_{m_n}\}_{n\in\mathbb{N}}$ such that $\{x_{m_n}\}=\mathcal{A}^{m_n}$ for every $n\in\mathbb{N}$, $\{x_{m_n}\}$ converges to the point $x_m$ with $\{x_m\}=\mathcal{A}^m$, a sequence $\{\lambda_n\}_{n\in\mathbb{N}}\subset(0,1]$, a sequence $\{t_n\}_{n\in\mathbb{N}}\subset[0,T-h_n]$ with $h_n\in(0,1/n)$ for every $n\in\mathbb{N}\setminus\{0\}$, such that
	$$\Vert(f_n(\cdot,t_n)-f_n(x_{m_n},t_n))-(g_n(\cdot,t_n+h_n)-g_n(x_{m_n},t_n+h_n))\Vert_\infty\ge\eps.$$
	where, for every $n\in \mathbb{N}$, $f_n(\cdot,t_n)$ is the viscosity solution of 
	$$H(x,\lambda_n f_n,Df_n)=F(x,m_n(t_n))+c(m_n(t_n)),\quad x\in M,$$
	and $g_n(\cdot,t_n+h_n)$ is the viscosity solution of
	$$H(x,\lambda_n g_n,Dg_n)=F(x,m_n(t_n+h_n))+c(m_n(t_n+h_n)),\quad x\in M.$$
	Then we take the subsequence of $\{\lambda_n\}$ to the limit, which converges to a constant $\lambda\in [0,1]$. There are two possible cases that either $\lambda=0$ or $\lambda\neq0$. We assume that $t_n$ and $t_n+h_n$ are both converges to $\tilde{t}$.
	
	By \cite[Lemma 2.3]{WYZ21}, $\{f_n(\cdot,t_n)\}$ and $\{g_n(\cdot,t_n+h_n)\}$ are uniformly bounded and equi-Lipschitz continuous on $M$ for all $\lambda\in(0,1]$, so we assume that there exist subsequences, still denoted by $\{f_n(\cdot,t_n)\}$ and $\{g_n(\cdot,t_n+h_n)\}$, uniformly converges to $f(\cdot,\tilde{t})$ and $g(\cdot,\tilde{t})$, respectively.
	
	For the case $\lambda\neq0$, since
	$$d_1(m_n(t_n),m(\tilde{t}))\le d_1(m_n(t_n),m_n(\tilde{t}))+d_1(m_n(\tilde{t}),m(\tilde{t}))\to 0,$$
	and assumption (F3), we have $F(\cdot,m_n(t_n))$ uniformly converges to $F(\cdot,m(\tilde{t}))$, and by Lemma \ref{5 critical value continuous}, $c(m_n(t_n))$ converges to $c(m(\tilde{t}))$. By the same argument, $F(\cdot,m_n(t_n+h_n))$ uniformly converges to $F(\cdot,m(\tilde{t}))$, and $c(m_n(t_n+h_n))$ converges to $c(m(\tilde{t}))$. By the stability of viscosity solutions, $f(\cdot,\tilde{t})$ and $g(\cdot,\tilde{t})$ are both viscosity solutions of
	$$H(x,\lambda u,Du)=F(x,m(\tilde{t}))+c(m(\tilde{t})),\quad x\in M,$$
	which implies that $f(\cdot,\tilde{t})=g(\cdot,\tilde{t})$ on $M$. Then, as $n\to\infty$, we have
	\begin{equation*}
		\begin{aligned}
			&\quad\ \  \Vert f_n(\cdot,t_n)-g_n(\cdot,t_n+h_n)\Vert_\infty\\
			&\le\Vert f_n(\cdot,t_n)-f(\cdot,\tilde{t})\Vert_\infty+\Vert g(\cdot,\tilde{t})-g_n(\cdot,t_n+h_n)\Vert_\infty
			\to0.
		\end{aligned}
	\end{equation*}
	Therefore, as $n\to\infty$,
	\begin{equation*}
		\begin{aligned}
			&\Vert (f_n(\cdot,t_n)-f_n(x_{m_n},t_n))-(g_n(\cdot,t_n+h_n)-g_n(x_{m_n},t_n+h_n))\Vert_\infty\\
			&\qquad\le2\Vert f_n(\cdot,t_n)-g_n(\cdot,t_n+h_n)\Vert_\infty\to0,
		\end{aligned}
	\end{equation*}
	which yields a contradiction.

	For the case $\lambda=0$, using the same argument as before, as $n\to\infty$, we have $x_{m_n}$ converges to $x_m$, $f_n(\cdot,t_n)$ uniformly converges to $f(\cdot,\tilde{t})$, $g_n(\cdot,t_n+h_n)$ uniformly converges to $g(\cdot,\tilde{t})$, and $f(\cdot,\tilde{t})$, $g(\cdot,\tilde{t})$ are both viscosity solutions of the equation
	\begin{equation}\label{5 H-J with u=0}
		H(x,0,Du)=F(x,m(\tilde{t}))+c(m(\tilde{t})),\quad x\in M.
	\end{equation}
	Therefore, $f(\cdot,\tilde{t})-f(x_m,\tilde{t})$ and $g(\cdot,\tilde{t})-g(x_m,\tilde{t})$ are also viscosity solutions of equation \eqref{5 H-J with u=0}, and they are equal on the projected Aubry set $\{x_m\}$, which implies that $$f(\cdot,\tilde{t})-f(x_m,\tilde{t})=g(\cdot,\tilde{t})-g(x_m,\tilde{t}).$$
	Finally, as $n\to\infty$, we obtain
	\begin{equation*}
		\begin{aligned}
			&\quad \Vert(f_n(\cdot,t_n)-f_n(x_{m_n},t_n))-(g_n(\cdot,t_n+h_n)-g_n(x_{m_n},t_n+h_n))\Vert_\infty\\
			&\qquad\le\Vert(f_n(\cdot,t_n)-f_n(x_{m_n},t_n)-(f(\cdot,\tilde{t})-f(x_m,\tilde{t}))\Vert_\infty\\
			&\qquad\quad+\Vert (g(\cdot,\tilde{t})-g(x_m,\tilde{t}))-(g_n(\cdot,t_n+h_n)-g_n(x_{m_n},t_n+h_n))\Vert_\infty\\
			&\qquad\qquad\longrightarrow\ 0,
		\end{aligned}
	\end{equation*}
	which yields a contradiction.
\end{proof}

\begin{cor}\label{5 u lam subconverge}
		For any $\lambda\in(0,1]$ and $m^\lambda\in C([0,T];\mathcal{P}(M))$, let $u^\lambda$ be the viscosity solution of the equation
	\begin{equation*}
		H(x,\lambda u^\lambda,Du^\lambda)=F(x,m^\lambda(t))+c(m^\lambda(t)),\quad x\in M,\ \forall t\in [0,T].
	\end{equation*}
	If $$\lim_{\lambda\to0}\sup_{t\in[0,T]}d_1(m^\lambda(t),m(t))=0,$$ for some $m\in C([0,T];\mathcal{P}(M))$, then, for any $\bar{x}\in M$, as $\lambda\to0$, the sequence $\{u^\lambda-u^\lambda(\bar{x},\cdot)\}$ uniformly converges to the function $\bar{v}-\bar{v}(\bar{x},\cdot)\in C(M\times[0,T])$, where $\bar{v}$ is a viscosity solution of 
	\begin{equation}\label{5 bar v vis sol}
		H(x,0,Dv)=F(x,m(t))+c(m(t)),\quad x\in M,\ \forall t\in[0,T].
	\end{equation}
	Moreover, $\bar{v}-\bar{v}(\bar{x},\cdot)=h_{m(\cdot)}(x_m,\cdot)-h_{m(\cdot)}(x_m,\bar{x})$.
\end{cor}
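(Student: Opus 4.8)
The plan is to prove the convergence first for each fixed time $t\in[0,T]$, by a stability-plus-uniqueness argument, and then to upgrade it to uniform convergence on $M\times[0,T]$ using the equicontinuity in $t$ supplied by Proposition~\ref{5 regular of v lambda}, while identifying the limit with the Peierls barrier $h_{m(t)}(x_m,\cdot)$.

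\textbf{Step 1} (convergence for fixed $t$). Fix $t\in[0,T]$ and an arbitrary sequence $\lambda_n\to 0$, and set $u_n:=u^{\lambda_n}(\cdot,t)$. Since the right-hand sides $F(\cdot,m^{\lambda_n}(t))+c(m^{\lambda_n}(t))$ are uniformly bounded by (F1) and Lemma~\ref{5 cm uni bound}, \cite[Lemma 2.3]{WYZ21} shows that $\{u_n\}$ is uniformly bounded and equi-Lipschitz on $M$; in particular $\lambda_n u_n\to 0$ uniformly on $M$. By Arzel\`a-Ascoli, along a subsequence $u_n-u_n(\bar x)$ converges uniformly on $M$ to some Lipschitz function $w$ with $w(\bar x)=0$. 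From the hypothesis $\sup_{t}d_1(m^{\lambda_n}(t),m(t))\to 0$, assumption (F3) and Lemma~\ref{5 critical value continuous}, the data $F(\cdot,m^{\lambda_n}(t))+c(m^{\lambda_n}(t))$ converge uniformly to $F(\cdot,m(t))+c(m(t))$; since $D(u_n-u_n(\bar x))=Du_n$ and $\lambda_n u_n\to 0$, the standard stability of viscosity solutions gives that $w$ is a viscosity solution of $H(x,0,Dw)=F(x,m(t))+c(m(t))$ on $M$. By (H4) the projected Aubry set $\mathcal{A}^{m(t)}=\{x_m\}$ is a singleton, so this equation admits a unique viscosity solution up to additive constants; as $h_{m(t)}(x_m,\cdot)$ is one such solution with $h_{m(t)}(x_m,x_m)=0$, we conclude $w=h_{m(t)}(x_m,\cdot)-h_{m(t)}(x_m,\bar x)$. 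Since this limit is independent of the chosen subsequence, the whole family converges: for every fixed $t$, $u^\lambda(\cdot,t)-u^\lambda(\bar x,t)\to h_{m(t)}(x_m,\cdot)-h_{m(t)}(x_m,\bar x)$ uniformly on $M$ as $\lambda\to 0$.

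\textbf{Step 2} (uniformity in $t$ and conclusion). Applying Proposition~\ref{5 regular of v lambda} with $m=m^\lambda$ — legitimate because the modulus $\omega$ there is independent of both $\lambda\in(0,1]$ and $m$ — and writing $\delta^\lambda_{t,s}(x):=(u^\lambda(x,t)-u^\lambda(x_{m^\lambda},t))-(u^\lambda(x,s)-u^\lambda(x_{m^\lambda},s))$, the elementary identity
$$\bigl(u^\lambda(x,t)-u^\lambda(\bar x,t)\bigr)-\bigl(u^\lambda(x,s)-u^\lambda(\bar x,s)\bigr)=\delta^\lambda_{t,s}(x)-\delta^\lambda_{t,s}(\bar x)$$
together with $\|\delta^\lambda_{t,s}\|_\infty\le\omega(|t-s|)$ yields $\|(u^\lambda(\cdot,t)-u^\lambda(\bar x,t))-(u^\lambda(\cdot,s)-u^\lambda(\bar x,s))\|_\infty\le 2\omega(|t-s|)$ for all $t,s\in[0,T]$ and all $\lambda\in(0,1]$. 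Hence the family $\{u^\lambda-u^\lambda(\bar x,\cdot)\}_{\lambda\in(0,1]}$ is uniformly bounded (it vanishes at $x=\bar x$ and is equi-Lipschitz in $x$ uniformly in $(t,\lambda)$) and equicontinuous on $M\times[0,T]$; by Arzel\`a-Ascoli it converges, along a subsequence, uniformly on $M\times[0,T]$ to some $W\in C(M\times[0,T])$, and Step~1 forces $W(\cdot,t)=h_{m(t)}(x_m,\cdot)-h_{m(t)}(x_m,\bar x)$ for every $t$. Since $W$ does not depend on the subsequence, the whole family converges. Setting $\bar v(x,t):=h_{m(t)}(x_m,x)$, which for each fixed $t$ is a viscosity solution of \eqref{5 bar v vis sol}, we obtain that $u^\lambda-u^\lambda(\bar x,\cdot)$ converges uniformly on $M\times[0,T]$ to $\bar v-\bar v(\bar x,\cdot)=h_{m(\cdot)}(x_m,\cdot)-h_{m(\cdot)}(x_m,\bar x)=W\in C(M\times[0,T])$, as claimed.

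\textbf{Main obstacle.} The delicate estimate, namely the equicontinuity in $t$ uniformly in $\lambda$, has been isolated in Proposition~\ref{5 regular of v lambda} and is used here as a black box. Within the present argument the crux of Step~1 is twofold: ensuring that the contact term $\lambda u$ genuinely vanishes in the limit, which relies on the uniform bound for $\{u^\lambda\}$ from \cite[Lemma 2.3]{WYZ21}, and ensuring that the limiting critical equation has a one-parameter family of solutions, so that normalizing at $\bar x$ pins down a unique limit and hence forces convergence of the entire family and not merely of subsequences — this is precisely where assumption (H4) is used. The final identification of the limit with $h_{m(t)}(x_m,\cdot)$ is then standard weak KAM theory for the Tonelli Hamiltonian $H(x,0,p)-F(x,m(t))$ at its Ma\~n\'e critical level $c(m(t))$.
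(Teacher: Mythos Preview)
Your proof is correct and follows essentially the same route as the paper: equicontinuity in $t$ from Proposition~\ref{5 regular of v lambda} (applied with the point $x_{m^\lambda}$, then transferred to $\bar x$ via the telescoping identity), Arzel\`a--Ascoli on $M\times[0,T]$, and identification of the limit through stability of viscosity solutions together with uniqueness up to constants under (H4). Your Step~1 makes explicit the reason the limit is independent of subsequences, which the paper merely asserts; and your application of Proposition~\ref{5 regular of v lambda} with the Aubry point $x_{m^\lambda}$ of the actual data $m^\lambda$ is in fact the cleaner and more accurate way to invoke that result.
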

\begin{proof}
	By  Proposition \ref{5 regular of v lambda}, for any $t,s\in[0,T]$, we have
	\begin{equation*}
		\begin{aligned}
			&\bigg\Vert \bigg(u^\lambda(\cdot,t)-u^\lambda(\bar{x},t)\bigg)-\bigg(u^\lambda(\cdot,s)-u^\lambda(\bar{x},s)\bigg)\bigg\Vert_\infty\\
			\le&\bigg\Vert\bigg((u^\lambda(\cdot,t)-u^\lambda(x_m,t))+(u^\lambda(x_m,t)-u^\lambda(\bar{x},t))\bigg)\\
			&\qquad\qquad-\bigg((u^\lambda(\cdot,s)-u^\lambda(x_m,s))+(u^\lambda(x_m,s)-u^\lambda(\bar{x},s))\bigg)\bigg\Vert_\infty\\
			\le&\bigg\Vert \bigg((u^\lambda(\cdot,t)-u^\lambda(x_m,t))-(u^\lambda(\cdot,s)-u^\lambda(x_m,s))\bigg)\bigg\Vert_\infty\\
			&\qquad\qquad+\bigg\Vert\bigg((u^\lambda(x_m,t)-u^\lambda(\bar{x},t))-(u^\lambda(x_m,s)-u^\lambda(\bar{x},s))\bigg)\bigg\Vert_\infty\\
			\le &2\omega(\vert t-s\vert),
		\end{aligned}
	\end{equation*}
	where the modulus $\omega$ is the same as in Proposition \ref{5 regular of v lambda}. Thus, $\{u^\lambda-u^\lambda(\bar{x},\cdot)\}$ is uniformly bounded and equi-continuous.  With Azel\'a-Ascoli theorem, it converges (up to a subsequence) to some function $\bar{v}-\bar{v}(\bar{x},\cdot)$. Since the limit is independent of subsequences, the sequence$\{u^\lambda-u^\lambda(\bar{x},\cdot)\}$ uniformly converges to $\bar{v}-\bar{v}(\bar{x},\cdot)$. $\bar{v}$ is a viscosity solution of \eqref{5 bar v vis sol}, which is the consequence of \cite[Theorem 1.1]{WYZ21}.
	
	Recall that viscosity solutions of the equation \eqref{5 bar v vis sol} under assumption (H4) are unique up to additive constants. Since $\bar{v}-\bar{v}(\bar{x},\cdot)$ and $h_{m(\cdot)}(x_m,\cdot)-h_{m(\cdot)}(x_m,\bar{x})$ coincide on $\bar{x}$, they coincide everywhere. The proof is complete.
\end{proof}

\begin{proof}[Proof of Theorem \ref{main result 2}]
	For any fixed $\lambda>0$, there exists a weak solution of $(u^\lambda,m^\lambda)\in C(M\times[0,T])\times C([0,T];\mathcal{P}(M))$ of $(qMFG_\lambda)$. We first prove that for any $\lambda\in (0,1]$, $m^\lambda$ is uniformly bounded and equi-Lipschitz continuous. Since for any $t\in[0,T]$ and $\lambda\in(0,1]$, $u^\lambda(\cdot,t)$ are uniformly bounded and equi-Lipschitz continuous, by the arguments in Lemma \ref{4 flow Lip t}, Lemma \ref{4 flow inverse Lip x} and Proposition \ref{4 push-for ac bound}, we have $m^\lambda$ is uniformly bounded and equi-Lipschitz continuous. With Corollary \ref{5 u lam subconverge}, as $\lambda\to 0$, up to a subsequence, we have
	\begin{enumerate}[(c1)]
		\item There exists some $\bar{\mu}\in C([0,T];\mathcal{P}(M))$, such that $$\lim_{\lambda\to0}\sup_{t\in[0,T]}d(m^\lambda(t),\bar{\mu}(t))=0.$$
		\item $c(m^\lambda(t))$ converges to $c(\bar{\mu}(t))$ uniformly on $[0,T]$.
		\item For any $\bar{x}\in M$, there exists some function $\bar{v}\in C(M\times[0,T])$, such that $u^\lambda-u(\bar{x},\cdot)$ uniformly converges to $\bar{v}-\bar{v}(\bar{x},\cdot)$ on $M\times [0,T]$. Moreover, $$\bar{v}-\bar{v}(\bar{x},\cdot)=h_{\bar{\mu}(\cdot)}(x_m,\cdot)-h_{\bar{\mu}(\cdot)}(x_m,\bar{x}).$$
		\item $Du^\lambda$ converges to $D\bar{v}$ a.e. on $M\times[0,T]$.
		\item The pair $(\bar{v},\bar{\mu})$ is a weak solution of $(qMFG_0)$.
	\end{enumerate}
	(c1) is the consequence of aforementioned arguments. (c2) is the consequence of Lemma \ref{5 critical value continuous}. (c3) in the consequence of Corollary \ref{5 u lam subconverge}. (c4) is due to the uniform semi-concavity of $u^\lambda$.
	Subsequently, we prove (c5). By Corollary \ref{5 u lam subconverge}, $\bar{v}$ is a viscosity solution of 
	$$H(x,0,D\bar{v})=F(x,\bar{\mu}(t))+c(\bar{\mu}(t)),\quad x\in M,\ \forall t\in[0,T].$$
	 Then we prove $\bar{\mu}$ is the weak solution of
	 \begin{equation}\label{5 continuity eq 0}
	 	\left\{\begin{aligned}
	 		&\partial_t\bar{\mu}-\text{div}\left(\bar{\mu}\dfrac{\partial H}{\partial p}(x,0,D\bar{v})\right)=0,\quad (x,t)\in M\times(0,T],\\
	 		&\bar{\mu}(0)=m_0.
	 	\end{aligned}\right.
	 \end{equation}  For any $\lambda>0$, we have the pushforward $m^\lambda=\Phi^\lambda(\cdot,\cdot)\sharp m_0$ of the measure $m_0$, where $$\Phi^\lambda(x,t)=x-\int_{0}^{t}\dfrac{\partial H}{\partial p}(\Phi^\lambda(x,s),\lambda u^\lambda(\Phi^\lambda(x,s),s),Du^\lambda(\Phi^\lambda(x,s),s))ds.$$
	Define $\tilde{\mu}:=\overline{\Phi}(\cdot,\cdot)\sharp m_0$, where 
	$$\overline{\Phi}(x,t)=x-\int_{0}^{t}\dfrac{\partial H}{\partial p}(\overline{\Phi}(x,s),0,D\bar{v}(\overline{\Phi}(x,s),s))ds.$$
	Then for any function $f\in C(M)$ and any $t\in[0,T]$, we have
	\begin{equation*}
		\begin{aligned}
			\lim_{\lambda\to0}&\int_M f(x)dm^\lambda(t)=\lim_{\lambda\to0}\int_M f(\Phi^\lambda(x,t))dm_0\\
			=&\int_Mf(\overline{\Phi}(x,t))dm_0=\int_Mf(x)d\tilde{\mu}(t),
		\end{aligned}
	\end{equation*}
	which implies that $$\lim_{\lambda\to0}\sup_{t\in[0,T]}d_1(m^\lambda(t),\tilde{\mu}(t))=0.$$ The first and third equalities are due to the definition of pushforward, the second equality is due to (c4). Since $\bar{\mu}$ and $\tilde{\mu}$ are both the limit of the same subsequence of $\{m^\lambda\}$, $\bar{\mu}=\tilde{\mu}$ on $[0,T]$. It is clear that $\bar{\mu}(0)=m_0$, and $\bar{\mu}$ is the unique weak solution of \eqref{5 continuity eq 0} due to Proposition \ref{4 sol of continuity eq}.

	Thus, $(\bar{v},\bar{\mu})$ is a weak solution of $(qMFG_0)$, and the proof is complete.
\end{proof}

\medskip

{\bf There is no conflict of interest. There is no data in this paper.}

\medskip

\noindent {\bf Acknowledgements:}
Xiaotian Hu is supported by China Scholarship Council (Grant No.  202406230212). The author gratefully acknowledges Professors Annalisa Cesaroni and Cristian Mendico for their helpful discussions and constructive advice concerning this paper.

\bibliographystyle{plain}

\appendix
\begin{appendices}

\section{Existence result under (H3')}
In the Appendix, we show that the existence result for $(qMFG_\lambda)$ under (H1), (H2) and the weaker monotonicity assumption (H3'). For convenience, we only consider the case when $\lambda=1$. 

By \cite[Proposition A.1, Proposition B.1]{WWY19a}, for any $m\in\mathcal{P}(M)$, the Hamilton-Jacobi equation
\begin{equation}\label{App H-J}
	H(x,u,Du)=F(x,m)+c(m),\quad x\in M,
\end{equation}
admits a unique viscosity solution. We claim that $\{u_m\}_{m\in\mathcal{P}(M)}$ is uniformly bounded and equi-Lipschitz continuous.

\begin{lem}\label{App w equi Lip}
	The viscosity solution $w_m$ of the Hamilton-Jacobi equation
	$$H(x,0,Dw)=F(x,m)+c(m),\quad x\in M$$
	is equi-Lipschitz continuous with the Lipschitz constant $D_1$, where 
	$$D_1=\sup_{\substack{x\in M\\ \vert q\vert=1}}L(x,0,q)+\Vert F\Vert_\infty+\sup_{m\in\mathcal{P}(M)}c(m).$$
\end{lem}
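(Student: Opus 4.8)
The plan is to read off the Lipschitz estimate from the Lagrangian (action) representation of viscosity subsolutions. The first observation is that, since the $u$–argument is frozen at $0$, the function $\tilde H(x,p):=H(x,0,p)$ is a Tonelli Hamiltonian by (H1)–(H2), with associated Lagrangian $\tilde L(x,q)=\sup_{p\in T_x^*M}\{p\cdot q-\tilde H(x,p)\}=L(x,0,q)$. Consequently the equation $H(x,0,Dw)=F(x,m)+c(m)$ is exactly the critical equation $\tilde H(x,Dw)-F(x,m)=c(m)$ for the Tonelli Hamiltonian $\tilde H(x,p)-F(x,m)$, whose Lagrangian is $L(x,0,q)+F(x,m)$ and whose Ma\~n\'e critical value is, by definition, $c(m)$; in particular $w_m$ is a fortiori a viscosity subsolution of this classical critical equation.

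The key step is the standard domination inequality: for every absolutely continuous curve $\gamma:[a,b]\to M$,
\[
w_m(\gamma(b))-w_m(\gamma(a))\;\le\;\int_a^b\bigl(L(\gamma(s),0,\dot\gamma(s))+F(\gamma(s),m)+c(m)\bigr)\,ds .
\]
I would obtain this exactly as in classical weak KAM theory (see \cite{F08}; the contact results of \cite{WWY19a} restricted to the slice $u\equiv 0$ also apply): mollify $w_m$ to a $C^1$ function $w_\varepsilon$ with $\tilde H(x,Dw_\varepsilon(x))\le F(x,m)+c(m)+\eta(\varepsilon)$ along $\gamma$ where $\eta(\varepsilon)\to 0$, use the Fenchel--Young inequality $Dw_\varepsilon(\gamma(s))\cdot\dot\gamma(s)\le \tilde H(\gamma(s),Dw_\varepsilon(\gamma(s)))+L(\gamma(s),0,\dot\gamma(s))$, integrate $\frac{d}{ds}w_\varepsilon(\gamma(s))$ over $[a,b]$, and let $\varepsilon\to 0$.

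Granting the domination inequality, the conclusion follows in one line. Fix $x,y\in M$ and let $\gamma:[0,|x-y|]\to M$ be a minimizing geodesic from $y$ to $x$ parametrized by arc length, so that $|\dot\gamma(s)|\equiv 1$. Then
\[
w_m(x)-w_m(y)\;\le\;\int_0^{|x-y|}\bigl(L(\gamma(s),0,\dot\gamma(s))+F(\gamma(s),m)+c(m)\bigr)\,ds\;\le\; D_1\,|x-y|,
\]
since the integrand is bounded above by $\sup_{x\in M,\,|q|=1}L(x,0,q)+\|F\|_\infty+c(m)\le D_1$. Here $D_1<\infty$ because $M$ is compact (so $L(\cdot,0,\cdot)$ is bounded on the unit sphere bundle), $\|F\|_\infty<\infty$ by (F1), and $\sup_{m\in\mathcal P(M)}c(m)<\infty$ by Lemma \ref{5 cm uni bound}. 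As this holds for every ordered pair, interchanging the roles of $x$ and $y$ yields $|w_m(x)-w_m(y)|\le D_1|x-y|$, and since $D_1$ does not depend on $m$ this is precisely the asserted equi-Lipschitz bound.

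There is no real obstacle here: the only point requiring care is the domination inequality for subsolutions, which becomes routine once one notices that freezing $u=0$ turns the contact Hamilton--Jacobi equation into a genuine Tonelli critical equation. Everything else is the elementary estimate above together with the uniform bound on $c(m)$ from Lemma \ref{5 cm uni bound}, which is exactly what makes the constant $D_1$ independent of $m$.
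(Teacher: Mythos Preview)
Your proof is correct and follows essentially the same route as the paper: both use the domination inequality for subsolutions of the critical equation and test it on a unit-speed curve joining $x$ and $y$, bounding the integrand by $D_1$. You are in fact more careful than the paper, since you explicitly reduce to the Tonelli setting, sketch the domination inequality via mollification and Fenchel--Young, and specify that the curve should be a minimizing geodesic so that its length equals $|x-y|$; the paper simply invokes the domination inequality without comment.
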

\begin{proof}
	For any $x,y\in M$ and $t>0$, we define an absolutely continuous curve $\gamma:[0,t]\to M$ such that $\vert \dot{\gamma}\vert=1$, $\gamma(0)=x$ and $\gamma(t)=y$, then we have
	\begin{equation*}
		\begin{aligned}
			w_m(y)-w_m(x)&\le \int_{0}^{t}L(\gamma(s),0,\dot{\gamma}(s))+F(\gamma(s),m)ds+c(m)t\\
			&\le\int_{0}^{t}\sup_{\substack{x\in M\\ \vert q\vert=1}}L(x,0,q)+\Vert F\Vert_\infty ds+\sup_{m\in\mathcal{P}(M)}c(m)t\\
			&=\left(\sup_{\substack{x\in M\\ \vert q\vert=1}}L(x,0,q)+\Vert F\Vert_\infty+\sup_{m\in\mathcal{P}(M)}c(m)\right)\vert x-y\vert.
		\end{aligned}
	\end{equation*}
	The proof is complete.
\end{proof}
$w_m$ satisfies that for any $t\ge0$,
$$w_m=\inf_{\gamma(t)=x}\left\{w_m(\gamma(0))+\int_{0}^{t}L(\gamma(s),0,\dot{\gamma}(s))+F(\gamma(s),m)ds+c(m)t\right\}.$$

For any given $t>0$, $m\in\mathcal{P}(M)$ and $x,y\in M$, we define the minimal action by
$$h_t^m(x,y)=\inf_{\gamma}\int_{0}^{t}L(\gamma(s),0,\dot{\gamma}(s))+F(\gamma(s),m)ds,$$
where the infimum is taken among all absolutely continuous curves $\gamma$ such that $\gamma(0)=x$ and $\gamma(t)=y$.

\begin{prop}
	For any given $t_0>0$, there exists a constant $D_{t_0}$ such that 
	$$\vert h_t^m(x,y)+c(m)t\vert\le D_{t_0},\quad \forall x,y\in M,\ \forall m\in\mathcal{P}.$$
\end{prop}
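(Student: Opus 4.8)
The plan is to prove the two-sided bound for $t$ ranging over $[t_0,\infty)$ (for $0<t<t_0$ the left-hand side is genuinely unbounded as $t\to 0^+$, by superlinearity of $L$, so the cutoff $t_0$ cannot be removed), treating the lower and upper inequalities separately and keeping every constant independent of $m\in\mathcal P(M)$.

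First I would dispose of the lower bound using only the Lax--Oleinik fixed point identity for $w_m$ recalled just above the statement. Writing that identity for $w_m(y)$ and restricting the infimum to curves that start at $x$ gives $w_m(y)\le w_m(x)+h_t^m(x,y)+c(m)t$ for every $t\ge 0$, hence
\[
 h_t^m(x,y)+c(m)t\ \ge\ w_m(y)-w_m(x)\ \ge\ -\big(\max_M w_m-\min_M w_m\big)\ \ge\ -D_1\operatorname{diam}(M),
\]
where $D_1$ is the Lipschitz constant of $w_m$ furnished, uniformly in $m$, by Lemma~\ref{App w equi Lip}. This already holds for all $t\ge 0$.

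The substance is the upper bound, and the difficulty is to keep it uniform as $t\to\infty$: a naive constant-speed competitor from $x$ to $y$ has action of order $(L(\cdot,0,0)+c(m))\,t$, which need not stay bounded, so one must recycle a near-calibrated curve. For $t\ge\max\{t_0,1\}$ I would apply the Lax--Oleinik identity at time $t-1$: for each $\eps>0$ it produces a curve $\gamma^\ast$ with $\gamma^\ast(t-1)=y$ and
\[
 \int_0^{t-1}\!\big(L(\gamma^\ast,0,\dot\gamma^\ast)+F(\gamma^\ast,m)\big)\,ds+c(m)(t-1)\ \le\ w_m(y)-w_m(\gamma^\ast(0))+\eps\ \le\ D_1\operatorname{diam}(M)+\eps .
\]
Then I prepend a constant-speed geodesic $\sigma\colon[0,1]\to M$ from $x$ to $\gamma^\ast(0)$; its speed is at most $\operatorname{diam}(M)$, so $\int_0^1\big(L(\sigma,0,\dot\sigma)+F(\sigma,m)\big)\,ds\le A$ with $A:=\sup\{|L(z,0,q)|:z\in M,\ |q|\le\operatorname{diam}(M)\}+\|F\|_\infty<\infty$ by (F1). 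The concatenation of $\sigma$ and $\gamma^\ast$ is admissible for $h_t^m(x,y)$, so, letting $\eps\downarrow 0$,
\[
 h_t^m(x,y)+c(m)t\ \le\ A+D_1\operatorname{diam}(M)+c(m)\ \le\ A+D_1\operatorname{diam}(M)+\sup_{m\in\mathcal P(M)}|c(m)|=:C_1 ,
\]
the last quantity being finite by Lemma~\ref{5 cm uni bound}. For the leftover range $t_0\le t<1$ (relevant only when $t_0<1$) I would instead take the direct constant-speed curve from $x$ to $y$ on $[0,t]$, whose speed is at most $\operatorname{diam}(M)/t_0$; since $t<1$ this yields $h_t^m(x,y)+c(m)t\le\sup\{|L(z,0,q)|:z\in M,\ |q|\le\operatorname{diam}(M)/t_0\}+\|F\|_\infty+\sup_m|c(m)|=:C_2(t_0)$.

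Setting $D_{t_0}:=\max\{D_1\operatorname{diam}(M),\,C_1,\,C_2(t_0)\}$ then gives $|h_t^m(x,y)+c(m)t|\le D_{t_0}$ for all $x,y\in M$, all $m\in\mathcal P(M)$ and all $t\ge t_0$. The one genuinely delicate step is the $t\to\infty$ uniformity of the upper bound, which is precisely where the Lax--Oleinik identity (equivalently, the existence of near-calibrated curves) enters in an essential way; the uniformity in $m$ throughout is inherited from Lemma~\ref{App w equi Lip}, assumption (F1) and Lemma~\ref{5 cm uni bound}.
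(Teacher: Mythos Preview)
Your proof is correct and follows essentially the same route as the paper's: the lower bound via the Lax--Oleinik domination inequality together with Lemma~\ref{App w equi Lip} is identical, and for the upper bound you use the same ``short connecting segment + (near-)calibrated curve of $w_m$'' decomposition, with only cosmetic differences (a unit-time prefix and an $\eps$-approximate minimizer in place of the paper's $t_0$-time prefix and exact backward minimizer from \cite{F08}, plus an explicit treatment of the range $t_0\le t<1$).
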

\begin{proof}
	\textbf{Lower bound.} By \cite[Proposition 4.4.2]{F08}, for any $t>0$, $x,y\in M$, we have 
	$$w_m(x)-w_m(y)\le h_t^m(x,y)+c(m)t,\quad \forall m\in\mathcal{P}(M),$$ with Lemma \ref{App w equi Lip}, $w_m$ is equi-Lipschitz continuous. Then 
	$$w_m(x)-w_m(y)\ge -D_1\vert x-y\vert\ge -D_1\text{diam}(M),$$
	which implies that $h_t^m(x,y)+c(m)t$ has a lower bound, where $\text{diam}(M)$ denotes the diameter of $M$.
	
	\textbf{Upper bound.} By \cite[Proposition 4.4.4]{F08}, for any given $t_0>0$, there exists a constant $C_{t_0}$ independent of $m\in\mathcal{P}(M)$, such that for each $x,y\in M$, we can find a $C^\infty$ curve $\gamma:[0,t_0]\to M$ with $\gamma(0)=x$, $\gamma(t_0)=y$, and 
	$$\int_{0}^{t_0}L(\gamma(s),0,\dot{\gamma}(s))+F(\gamma(s),m)ds+c(m)t_0\le C_{t_0},$$
	where $C_{t_0}:=\tilde{C}_{t_0}$, $\tilde{C}_{t_0}=\sup_{\substack{x\in M\\ \vert q\vert\le \frac{\text{diam}(M)}{t_0}}}L(x,0,q)+\Vert F\Vert_\infty+\sup_{m\in\mathcal{P}(M)}c(m).$ Then by \cite[Lemma 5.3.2]{F08}, for any $t>t_0$, there exist a minimizer $\gamma_2:[t_0,t]\to M$ of $w_m(x)$ with $\gamma(t)=x$ and a $C^\infty$ curve $\gamma_1;[0,t_0]\to M$ with $\gamma_1(t_0)=\gamma_2(t_0)$, such that
	\begin{equation*}
		\begin{aligned}
			h_t^m(x,y)+c(m)t&\le \int_{0}^{t_0}L(\gamma_1(s),0,\dot{\gamma}_1(s))+F(\gamma_1(s),m)ds+c(m)t_0\\
			&\qquad \qquad+\int_{t_0}^{t}L(\gamma_2(s),0,\dot{\gamma}_2(s))+F(\gamma_2(s),m)ds+c(m)(t-t_0)\\
			&\le C_{t_0}+w_m(x)-w_m(\gamma_2(t_0))\\
			&\le C_{t_0}+D_1\text{diam}(M).
		\end{aligned}
	\end{equation*}
	Let $D_{t_0}:=\max\{ -D_1\text{diam}(M),C_{t_0}+D_1\text{diam}(M)\}$, the proof is complete.
\end{proof}

We define the semigroup for $L(x,u,q)$ to prove the uniform boundedness of $\{u_m\}$. For any continuous function $\phi\in C(M)$ and $t\ge0$, define 
\begin{equation*}
	\begin{aligned}
		T_t^m\phi(x):=\inf_{\gamma(t)=x}\left\{\phi(\gamma(0))+\int_{0}^{t}L(\gamma(s),T_s^m\phi(\gamma(s)),\dot{\gamma}(s)+F(\gamma(s),m))ds+c(m)t\right\},\\
	\end{aligned}
\end{equation*}
where the infimum is taken over all absolutely continuous curves $\gamma$ such that $\gamma(t)=x$. By \cite[Proposition 2.9, Proposition A.1]{WWY19a}, for any $m\in\mathcal{P}$, any $\phi\in C(M)$, the limit
$$u_m:=\lim_{t\to\infty}T_t^m\phi$$
is the unique viscosity solution of \eqref{App H-J}.

\begin{prop}\label{App semigroup bound}
	Given $t_0>0$, for any $\phi\in C(M)$, there exists a constant $D_2>0$, dependent on $t_0$ and $\phi$, such that 
	$$\Vert T^m_t\phi\Vert_\infty\le D_2,\quad \forall t\in[t_0,\infty),\ \forall m\in\mathcal{P}(M).$$
\end{prop}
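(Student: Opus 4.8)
The plan is to reduce the boundedness of the contact Lax--Oleinik semigroup $T^m_t$ to the uniform (in $m$) boundedness of the stationary solutions $u_m$, and to obtain the latter by a comparison argument with the $u$-frozen solutions $w_m$ of Lemma \ref{App w equi Lip}. Once $\|u_m\|_\infty$ is controlled uniformly in $m$, the semigroup bound follows because $u_m$ is a fixed point of $T^m_t$ and $T^m_t$ is order preserving and commutes favorably with constant shifts.

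First I would prove that $\sup_{m\in\mathcal P(M)}\|u_m\|_\infty<\infty$, where $u_m$ is the unique viscosity solution of $H(x,u,Du)=F(x,m)+c(m)$. Fix a base point $x_0\in M$ and normalize so that $w_m(x_0)=0$; since $w_m$ is equi-Lipschitz with constant $D_1$ (Lemma \ref{App w equi Lip}), this forces $\|w_m\|_\infty\le R:=D_1\,\text{diam}(M)$. Using $(H3')$, for the constant $R$ the function $w_m+R\ge 0$ satisfies, in the viscosity sense, $H(x,w_m+R,D(w_m+R))=H(x,w_m+R,Dw_m)\ge H(x,0,Dw_m)=F(x,m)+c(m)$, so $w_m+R$ is a supersolution of $H(x,u,Du)=F(x,m)+c(m)$; symmetrically $w_m-R\le 0$ is a subsolution. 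The comparison principle for contact Hamilton--Jacobi equations (valid because $\partial_u H>0$) then gives $w_m-R\le u_m\le w_m+R$, hence $\|u_m\|_\infty\le 2R$, uniformly in $m$.

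Next I would use that $u_m$ is a fixed point of the semigroup, $T^m_tu_m=u_m$ (this follows from $u_m=\lim_s T^m_s\phi$ and the semigroup property), together with the facts that $T^m_t$ is order preserving and satisfies the constant-shift inequalities $T^m_t(\psi+a)\le T^m_t\psi+a$ and $T^m_t(\psi-a)\ge T^m_t\psi-a$ for every $a\ge0$; both are consequences of the comparison principle for the evolutionary equation $\partial_t v+H(x,v,Dv)=F(x,m)+c(m)$, whose viscosity solution with initial datum $\psi$ is $T^m_t\psi$, using $\partial_u H\ge0$. Combining these with $-\|\phi\|_\infty\le\phi\le\|\phi\|_\infty$ and $-2R\le u_m\le 2R$, we get for every $t\ge0$ and every $m$
\[
T^m_t\phi\le T^m_t\big(u_m+(2R+\|\phi\|_\infty)\big)\le T^m_tu_m+2R+\|\phi\|_\infty=u_m+2R+\|\phi\|_\infty\le 4R+\|\phi\|_\infty,
\]
and symmetrically $T^m_t\phi\ge -4R-\|\phi\|_\infty$. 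Setting $D_2:=4D_1\,\text{diam}(M)+\|\phi\|_\infty$ yields $\|T^m_t\phi\|_\infty\le D_2$ for all $t\ge t_0$ (indeed for all $t\ge0$), uniformly in $m$; letting $t\to\infty$ also reproves the uniform bound on $u_m$.

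I expect the main obstacle to be the first step: one must keep in mind that $w_m$ is only determined up to an additive constant, so the normalization at $x_0$ is essential, and one must check cleanly that adding $\pm R$ turns $w_m$ into a super/sub-solution of the genuinely $u$-dependent equation — this is precisely where $(H3')$ is used (monotonicity of $H$ in $u$, with no lower bound $\tau$ needed), both for the super/sub-solution property and for applicability of the comparison principle. An alternative, more hands-on route would compare $T^m_t$ with the linear semigroup built from $L(x,0,\cdot)$ and invoke the bound $|h^m_t(x,y)+c(m)t|\le D_{t_0}$ of the previous proposition together with $\partial_u L\ge-\Lambda$; however this only yields Grönwall-type integral inequalities whose constants are controlled on bounded $t$-intervals, so one still has to restart the estimate at time $t_0$ via the semigroup property and compare with the stationary solution to close it uniformly in $t$ — which is why the comparison argument of the first step is the cleaner way to conclude.
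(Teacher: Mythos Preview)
Your argument is correct but follows a genuinely different route from the paper. The paper proves the semigroup bound \emph{first}, by a direct trajectory analysis: for the lower bound it takes a minimizer $\alpha$ of $T^m_t\phi(x)$, finds a last time $s_0$ at which $s\mapsto T^m_s\phi(\alpha(s))$ is still $\ge\min\{\phi(\alpha(0)),0\}$, and uses $\partial_u L\le 0$ on $(s_0,t]$ to compare with the $u$-frozen action $h^m_{t-s_0}$, which is controlled by the preceding proposition's bound $D_{t_0}$; the upper bound is symmetric, using a minimizer of $h^m_t$. Only \emph{after} this does the paper deduce the uniform bound on $u_m=\lim_{t\to\infty}T^m_t\phi$.

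You reverse the logical order: you bound $u_m$ directly via the comparison principle, sandwiching it between $w_m\pm R$ (this is where $(H3')$ enters, both to make $w_m\pm R$ a super/sub-solution and to justify comparison), and then exploit that $u_m$ is a fixed point together with the order-preserving and constant-shift properties of $T^m_t$ to transfer the bound to $T^m_t\phi$. This is cleaner and, as you note, actually yields the estimate for all $t\ge 0$ rather than only $t\ge t_0$; it also bypasses the previous proposition on $h^m_t(x,y)+c(m)t$ entirely. The paper's route, by contrast, avoids invoking the stationary comparison principle under the weaker hypothesis $(H3')$ and is more hands-on with the variational structure. Both are valid; yours is shorter, the paper's is more self-contained with respect to the dynamical tools developed in the appendix.
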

\begin{proof}
	\textbf{Lower bound.} For any $(x,t)\in M\times[t_0,\infty)$, such that $T_t^m\phi(x)<0$, let $\alpha:[0,t]\to M$ be the minimizer of $T_t^m(x)$. We consider the function
	$$s\mapsto T_t^m(\alpha(s)).$$
	Since $T_0^m\phi(\alpha(0))=\phi(\alpha(0))$, $T_t^m\phi(x)<0$, there exists $s_0\in[0,t]$, such that $T_{s_0}^m\phi(\alpha(s_0))\ge\min\{\phi(\alpha(0)),0\}$ and $T_s^m\phi(\alpha(s))<0$, for $s\in(s_0,t]$. Then we have
	\begin{equation*}
		\begin{aligned}
			T_t^m\phi(x)&=T_{s_0}^m\phi(\alpha(s_0))+\int_{s_0}^{t}L(\alpha(s),T_s^m(\alpha(s)),\dot{\alpha}(s))+F(\alpha(s),m)ds+c(m)(t-s_0)\\
			&\ge\min\{\phi(\alpha(0)),0\}+\int_{s_0}^{t}L(\alpha(s),0,\dot{\alpha}(s))+F(\alpha(s),m)ds+c(m)(t-s_0)\\
			&\ge -\Vert \phi\Vert_\infty+h_{t-s_0}^m(\alpha(s_0),x)\\
			&\ge -\Vert \phi\Vert_\infty+D_{t_0},
		\end{aligned}
	\end{equation*}
	which gives a lower bound of $T_t\phi(x)$ on $M\times[t_0,\infty)$.
	
	\textbf{Upper bound}. For any $(x,t)\in M\times [t_0,\infty)$, such that $T_t^m\phi(x)>0$. Let $\beta:[0,t]\to M$ be a minimizer of $h_t^m(y,x)$ with $\beta(0)=y$ and $\beta(t)=x$. We consider the function
	$$s\mapsto T_s^m\phi(\beta(s)).$$
	Since $T_0^m\phi(\beta(0))=\phi(\beta(0))$, $T_t^m\phi(x)>0$, there exists $s_0\in[0,t]$, such that $T_{s_0}^m\phi(\beta(s_0))\le\max\{\phi(\beta(0)),0\}$ and $T_s^m\phi(\beta(s))>0$, for $s\in (s_0,t]$. Then we have
	\begin{equation*}
		\begin{aligned}
			T_t^m\phi(x)&\le T_{s_0}^m\phi(\beta(s_0))+\int_{s_0}^{t}L(\beta(s),T_s^m\phi(\beta(s)),\dot{\beta}(s))+F(\beta(s),m)ds+c(m)(t-s_0)\\
			&\le\max\{\Vert\phi\Vert_\infty,0\}+\int_{s_0}^{t}L(\beta(s),0,\dot{\beta}(s))+F(\beta(s),m)ds+c(m)(t-s_0)\\
			&\le \Vert \phi\Vert_\infty+h_{t-s_0}^m(\beta(s_0),x).
		\end{aligned}
	\end{equation*}
	It is clear that at least one of $s_0$ and $t-s_0$ is no less than $t_0/2$. If $s_0\ge t_0/2$, then $$h_{t-s_0}^m(\beta(s_0),x)=h_t^m(\beta(s_0),x)-h_{s_0}^m(\beta(0),\beta(s_0))\le 2D_{t_0/2}.$$
	If $t-s_0\ge t_0/2$, then $h_{t-s_0}^m(\beta(s_0),x)\le D_{t_0/2}$. We give the upper bound of $T_t^m(x)$ on $M\times[t_0,\infty)$.
	
	The proof is complete.
\end{proof}

By Proposition \ref{App semigroup bound}, since the unique viscosity solution of \eqref{App H-J} $u_m=\lim_{t\to\infty} T_t^m\phi$ for any $\phi\in C(M)$, the sequence $\{u_m\}_{m\in\mathcal{P}(M)}$ is uniformly bounded on $M$. We establish the equi-Lipschitz continuity of $u_m$ in the following proposition.

\begin{prop}
	$\{u_m\}_{m\in\mathcal{P}(M)}$ is equi-Lipschitz continuous on $M$.
\end{prop}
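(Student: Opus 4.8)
\noindent The plan is to repeat, \emph{mutatis mutandis}, the argument of Lemma \ref{App w equi Lip}: there the Lagrangian was evaluated with the constant argument $0$, whereas now it must be evaluated at $u_m$ itself, and the extra contribution is controlled by the uniform bound $\Vert u_m\Vert_\infty\le D_2$ supplied by Proposition \ref{App semigroup bound}. The structural fact I would rely on is that $u_m=\lim_{t\to\infty}T_t^m\phi$ is a fixed point of the semigroup, i.e. $T_\tau^m u_m=u_m$ for every $\tau\ge0$; this follows from the semigroup identity $T_\tau^m\circ T_t^m=T_{\tau+t}^m$ and the continuity of $T_\tau^m$ on $C(M)$ established in \cite{WWY19a}. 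Consequently, from the definition of $T_\tau^m u_m$ together with the identity $T_s^m u_m(\gamma(s))=u_m(\gamma(s))$, for any absolutely continuous curve $\gamma:[0,\tau]\to M$ one has
\begin{equation*}
	u_m(\gamma(\tau))-u_m(\gamma(0))\le\int_0^\tau\big(L(\gamma(s),u_m(\gamma(s)),\dot\gamma(s))+F(\gamma(s),m)\big)\,ds+c(m)\tau.
\end{equation*}

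First I would fix $m\in\mathcal{P}(M)$ and $x,y\in M$, and choose a curve $\gamma:[0,\vert x-y\vert]\to M$ with $\vert\dot\gamma\vert\equiv1$, $\gamma(0)=x$ and $\gamma(\vert x-y\vert)=y$, exactly as in the proof of Lemma \ref{App w equi Lip}. Applying the displayed inequality with $\tau=\vert x-y\vert$ and then bounding the integrand — using $\Vert u_m\Vert_\infty\le D_2$, $\vert\dot\gamma\vert=1$, $\vert F\vert\le\Vert F\Vert_\infty$, and $c(m)\le\sup_{m\in\mathcal{P}(M)}c(m)<\infty$ by Lemma \ref{5 cm uni bound} — gives
\begin{equation*}
	u_m(y)-u_m(x)\le\left(\sup_{\substack{z\in M,\ \vert a\vert\le D_2\\ \vert v\vert=1}}L(z,a,v)+\Vert F\Vert_\infty+\sup_{m\in\mathcal{P}(M)}c(m)\right)\vert x-y\vert=:D_3\vert x-y\vert,
\end{equation*}
where $D_3<\infty$ by continuity of $L$ and compactness of $M$, of $[-D_2,D_2]$, and of the unit sphere. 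Exchanging the roles of $x$ and $y$ then yields $\vert u_m(x)-u_m(y)\vert\le D_3\vert x-y\vert$ with $D_3$ independent of $m$, which is the assertion.

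The only step that is not a verbatim copy of Lemma \ref{App w equi Lip} is the one I expect to require the most care: justifying that the variational formula for $T_\tau^m$ may be applied with $u_m$ itself placed in the $u$-slot of $L$, that is, the fixed-point identity $T_\tau^m u_m=u_m$ together with the representation of $T_\tau^m\psi$ as an infimum over curves. Both are consequences of the variational principle for contact Hamilton--Jacobi equations in \cite{WWY19a} (Proposition 2.9 and Proposition A.1) and of the continuity of the semigroup, so no new analysis of the contact flow is needed. Everything else is already in place: the uniform bound $\Vert u_m\Vert_\infty\le D_2$ used to make $D_3$ finite, and the uniform bound on $c(m)$.
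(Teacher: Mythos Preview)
Your proposal is correct and follows essentially the same route as the paper: both arguments take a unit-speed curve joining $x$ to $y$, invoke the variational inequality $u_m(y)-u_m(x)\le\int_0^{\vert x-y\vert}L(\gamma,u_m(\gamma),\dot\gamma)+F(\gamma,m)\,ds+c(m)\vert x-y\vert$, and bound the right-hand side using the uniform bound on $\Vert u_m\Vert_\infty$ from Proposition \ref{App semigroup bound} together with Lemma \ref{5 cm uni bound}. You are in fact more explicit than the paper in justifying the variational inequality via the fixed-point identity $T_\tau^m u_m=u_m$, which the paper simply asserts.
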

\begin{proof}
	For any $t>0$ and any $x,y\in M$, we define a curve $\gamma:[0,t]\to M$ with $\vert\dot{\gamma}\vert=1$, $\gamma(0)=x$ and $\gamma(t)=y$. Then we have
	\begin{equation*}
		\begin{aligned}
			u_m(y)-u_m(x)\le \int_{0}^{t}L(\gamma(s),u_m(\gamma(s)),\dot{\gamma}(s))+F(\gamma(s),m)ds+c(m)t\\
			\le \left(\sup_{\substack{x\in M\\ \Vert u_m\Vert_\infty\\ \vert q\vert=1}}L(x,u_m,q)+\Vert F\Vert_\infty+\sup_{m\in\mathcal{P}(M)} c(m)\right)\vert x-y\vert,
		\end{aligned}
	\end{equation*}
	By Proposition \ref{App semigroup bound} and Lemma \ref{5 cm uni bound}, the Lipschitz constant is uniformly bounded, which yields the result.
\end{proof}

To prove the existence result under assumption (H3'), we establish the continuity $t\mapsto u_{m(t)}(x)$, where $u_{m(t)}$ is the viscosity solution of 
\begin{equation}\label{App H-Jm}
	H(x,u,Du)=F(x,m(t))+c(m(t)),\quad x\in M,\ \forall t\in[0,T].
\end{equation}

\begin{prop}\label{AA continuity in t}
	For any $m\in C([0,T];\mathcal{P}(M))$, the viscosity solution $u_{m(\cdot)}$ of \eqref{App H-Jm} is continuous on $[0,T]$.
\end{prop}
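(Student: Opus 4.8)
The goal is to prove that $t \mapsto u_{m(t)}$ is continuous on $[0,T]$, where $u_{m(t)}$ solves \eqref{App H-Jm}, under the weaker monotonicity (H3'). The key difficulty, compared with Proposition \ref{3 u lip in t}, is that under (H3') we no longer have the strict lower bound $\partial H/\partial u > \tau > 0$, so the simple shift trick $v^{\pm} = u_{m(s)} \pm \tau^{-1}\Vert F(\cdot,m(t)) + c(m(t)) - F(\cdot,m(s)) - c(m(s))\Vert_\infty$ producing sub/supersolutions via comparison is unavailable. The plan is therefore to argue by a compactness/stability argument rather than by a quantitative estimate.

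\begin{proof}
	Fix $t_0 \in [0,T]$ and let $\{t_n\}_{n \in \mathbb{N}} \subset [0,T]$ with $t_n \to t_0$. Since $m \in C([0,T];\mathcal{P}(M))$, we have $d_1(m(t_n), m(t_0)) \to 0$, hence by (F3) the functions $F(\cdot, m(t_n))$ converge uniformly on $M$ to $F(\cdot, m(t_0))$, and by Lemma \ref{5 critical value continuous} the constants $c(m(t_n))$ converge to $c(m(t_0))$. Thus the right-hand sides $F(\cdot, m(t_n)) + c(m(t_n))$ converge uniformly to $F(\cdot, m(t_0)) + c(m(t_0))$. By the preceding propositions in this appendix, the family $\{u_{m(t_n)}\}_{n \in \mathbb{N}}$ is uniformly bounded and equi-Lipschitz continuous on $M$; since $M$ is compact, Arzel\`a--Ascoli provides a subsequence $\{u_{m(t_{n_k})}\}$ converging uniformly to some $\tilde{u} \in C(M)$. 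By the stability of viscosity solutions under uniform convergence of the solutions and of the right-hand sides, $\tilde{u}$ is a viscosity solution of $H(x, \tilde{u}, D\tilde{u}) = F(x, m(t_0)) + c(m(t_0))$ on $M$. By \cite[Proposition A.1, Proposition B.1]{WWY19a}, this equation has a \emph{unique} viscosity solution, namely $u_{m(t_0)}$; hence $\tilde{u} = u_{m(t_0)}$. Since the limit is independent of the chosen subsequence, the full sequence $u_{m(t_n)}$ converges uniformly to $u_{m(t_0)}$. As $\{t_n\}$ was an arbitrary sequence converging to $t_0$, this proves $t \mapsto u_{m(t)}$ is continuous on $[0,T]$ (with values in $C(M)$ endowed with the sup-norm). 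The proof is complete.
\end{proof}

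The main obstacle in this argument is ensuring that the stability of viscosity solutions applies in the contact setting where $H$ depends on the unknown $u$; this is precisely what the uniqueness result of \cite[Proposition A.1]{WWY19a} under (H3') supplies, together with the fact that uniform convergence of $u_{m(t_n)} \to \tilde u$ passes to the limit in the viscosity inequalities for $H(x, u_{m(t_n)}, Du_{m(t_n)}) = F(x,m(t_n)) + c(m(t_n))$ because $H$ is continuous and the right-hand sides converge uniformly. No quantitative modulus of continuity is obtained this way (and indeed, as remarked after Proposition \ref{3 u lip in t}, none should be expected under (H3') alone), but continuity suffices for the measurability of $D^+ u_{m(t)}$ in $t$ and hence for the flow \eqref{4 dfn of flow} to be well defined, which is all that the existence proof for $(qMFG_\lambda)$ requires.
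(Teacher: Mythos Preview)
Your proof is correct and follows essentially the same approach as the paper: compactness via the uniform bounds and equi-Lipschitz estimates established in the appendix, together with stability of viscosity solutions and the uniqueness result \cite[Proposition A.1, Proposition B.1]{WWY19a} under (H3'). The only cosmetic difference is that the paper phrases the argument by contradiction (assuming $\Vert u_{m(t_n)}-u_{m(t_n+h_n)}\Vert_\infty\ge\eps$ along $h_n\to 0$ and deriving a contradiction exactly as in Proposition~\ref{5 regular of v lambda}), whereas you give the direct sequential version; the underlying mechanism is identical.
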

\begin{proof}
	If not, there exist $\eps>0$, a sequence $t_n\in[0,T-h_n]$ with $h_n\in(0,1/n)$, for every $n\in\mathbb{N}\setminus\{0\}$, such that 
	$$\Vert f_n(\cdot,t_n)-g_n(\cdot,t_n+h_n)\Vert_\infty>\eps,$$
	where, for every $n\in \mathbb{N}$, $f_n(\cdot,t_n)$ is the viscosity solution of 
	$$H(x,f_n,Df_n)=F(x,m(t_n))+c(m(t_n)),\quad x\in M,$$
	and $g_n(\cdot,t_n+h_n)$ is the viscosity solution of
	$$H(x,g_n,Dg_n)=F(x,m(t_n+h_n))+c(m(t_n+h_n)),\quad x\in M.$$
	The remaining proof is the same as that in Proposition \ref{5 regular of v lambda}.
\end{proof}

The rest of the proof follows the same argument as in Section 4.

\section{Non-uniqueness}

In this part, we explain why it is particularly difficult to establish the uniqueness of weak solutions for $(qMFG)$ and $(qMFG_0)$.The uniqueness proof for the second order case (\cite[Theorem 2.5]{M20} and \cite[Theorem 3.4]{CM23}), strongly relies on the Lipschitz continuity of the map $t\mapsto Du(x,t)$, which is proved by the continuous dependence estimates based on the strong maximum principle and on the elliptic regularity. More precisely, there exists a constant $C\ge0$, such that for any $t,s\in[0,T]$, there exists
\begin{equation}\label{BB continuity of Du in t}
	\Vert Du(\cdot,t)-Du(\cdot,s)\Vert_\infty\le C\vert t-s\vert.
\end{equation}

However, in the first order case, the regularity of weak solutions is insufficiently to ensure this continuity, and the derivative $Du$ may exhibit singular points, where, for any fixed $t\in[0,T]$, a singular point of $u(\cdot,t)$ is a point $x_0$ such that $Du(x_0,t)$ does not exist. The singular point varies with $t$, which may lead to a jump in $Du$ between different singular points. We will present some counterexamples to illustrate the failure of the continuity of $Du$ in $t$. 

\begin{enumerate}[(Ex.1)]
	\item 
	We first consider an example from \cite{CZ17}, where the Hamiltonian is given by $H_\eps(x,p)=\dfrac{1}{2}\vert p\vert^2-2\sin^2(\pi x)+\eps p$, with the periodic boundary condition on $[0,1]$ and sufficiently small $\eps>0$. The corresponding Hamilton-Jacobi equation is $$\dfrac{1}{2}\vert Du\vert^2+\eps Du=2\sin^2(\pi x)-\dfrac{\eps^2}{2},\quad x\in[0,1].$$
	Let $u_0$ and $u_\eps$ denote the viscosity solutions in the case $\eps=0$ and $\eps>0$, respectively. It is straightforward to verify that the singular point of $u_0$ is at $x_0=\dfrac{1}{2}$, while the singular point of $u_\eps$ is at $x_\eps>\dfrac{1}{2}$, where $\cos(\pi x_\eps)=-\dfrac{\eps \pi}{4}$.
	Then we have that 
	\begin{equation*}
		Du_\eps(x)=\left\{\begin{aligned}
			&2\sin(\pi x)-\eps,\quad &&x\in[0,x_\eps),\\
			&-2\sin(\pi x)-\eps,\quad &&x\in(x_\eps,1],
		\end{aligned}\right.
	\end{equation*}
	and
	\begin{equation*}
		\Vert Du_\eps-Du_0\Vert_\infty\ge \sup_{\frac{1}{2}<x<x_\eps}\vert Du_\eps-Du_0\vert\ge \sup_{\frac{1}{2}<x<x_\eps}\vert 4\sin(\pi x)-\eps\vert.
	\end{equation*}
	If $0<\eps<\frac{3}{\pi}$, then $\Vert Du_\eps-Du_0\Vert_\infty=4\sqrt{1-\frac{\eps^2\pi^2}{16}}-\eps\ge \sqrt{7}-\frac{3}{\pi}>1.$
	
	\item Even though the projected Aubry set remains stable, singular points of viscosity solutions to the associated Hamilton-Jacobi equation may still vary. 
	
	First, we define a family of functions $\{g_n\}_{n\in\mathbb{N}}$ on $[n,n+1]$ with $g_0(x)=x^4(1-x)^4(1+x)$ on $[0,1]$. It is clear that $g_0^{(i)}(0)=g_0^{(i)}(1)=0$, for $i=0,1,2,3$. Then for each $n\in\mathbb{N}$, we set $g_n(x):=g_0(x-n)$ on $[n,n+1]$. Let $g(x):=g_n(x)$, which is a 1-periodic function of  class $C^3$.
	
	We consider the Hamiltonian
	$$H_\eps(x,p)=\dfrac{1}{2}p^2-2\sin^2(\pi x)-\eps g(x),$$
	for sufficiently small $\eps>0$, with the periodic boundary condition on $[0,1]$. Let $V_\eps(x):=-2\sin^2(\pi x)-\eps g(x).$ The critical value $$c(H_\eps)=\inf_{\varphi\in C^1(M)}\max_{x\in [0,1]}\left(\dfrac{1}{2}\vert D\varphi\vert^2+V_\eps(x)\right)=\max_{x\in [0,1]}V_\eps(x)=0.$$
	The projected Aubry set $\mathcal{A}_\eps=\{0\}$, and
	the derivative $Du_\eps$ is defined by
	\begin{equation*}
		Du_\eps(x)=\left\{\begin{aligned}
			&\sqrt{-2V_\eps(x)},\quad &&x\in[0,x_\eps),\\
			&-\sqrt{-2V_\eps(x)},\quad &&x\in(x_\eps,1].
		\end{aligned}\right.
	\end{equation*}
	It is clear that when $\eps=0$, the singular point is $x_0=\frac{1}{2}$. The following \ref{Fig.sub.1} shows the variation of singular points with $\eps$.
	\begin{figure}[H]
		\centering  
		\subfigure[{}]{
			\label{Fig.sub.1}
			\includegraphics[width=0.48\textwidth]{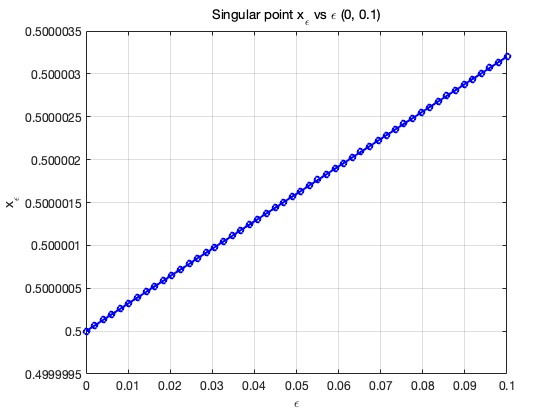}}
	\end{figure}
	We have 
	\begin{equation*}
		\Vert Du_{\eps_1}-Du_{\eps_2}\Vert_\infty\ge \max_{\frac{1}{2}<x\le\min\{x_{\eps_1},x_{\eps_2}\}}\left\vert \sqrt{-2V_{\eps_1}(x)}+\sqrt{-2V_{\eps_2}(x)}\right\vert>2.
	\end{equation*}
		
	An analogous behaviour can be observed in the contact case, where the Hamiltonians take the form $u + H_\epsilon(x, p).$
\end{enumerate}

From the above examples, we obverse that when the Hamilton-Jacobi equation is subject perturbations that cause the singular points of $u_\eps$ to vary, the derivative $Du_\eps$ fails to uniformly converge to $Du_0$, as $\eps\to0$, and a jump arises across the moving singular points.

For any $m\in C([0,T];\mathcal{P}(M))$ and $t,s\in[0,T]$, we interpret $d_1(m(t),m(s))$ as playing the role of the perturbation parameter $\eps$ in (Ex.1)-(Ex.2). This analogy suggests that the derivative $Du$ may exhibit discontinuity in $t$, and thus fails to be continuous on $[0,T]$, i.e. for any $t,s\in[0,T]$, the inequality \eqref{BB continuity of Du in t} does not hold.

\end{appendices}

\end{document}